\newcommand{\R}{{\mathbb R}}       
\newcommand{\Z}{{\mathbb Z}}       
\newcommand{\DD}{{\mathcal D}}
\newcommand{\HH}{{\mathcal H}}
\newcommand{\cP}{{\mathcal P}}
\newcommand{\EE}{{\mathcal E}}
\newcommand{\diam}{{\rm diam}}
\newcommand{\dist}{{\rm dist}}
\newcommand{\fiproof}{{\hspace*{\fill} $\square$ \vspace{2pt}}}
\newcommand{\rf}[1]{{(\ref{#1})}}
\newcommand{\supp}{\operatorname{supp}}
\newcommand{\ve}{{\varepsilon}}
\newcommand{\vv}{{\vspace{2mm}}}
\newcommand{\wt}[1]{{\widetilde{#1}}}
\newcommand{\wh}[1]{{\widehat{#1}}}
\newcommand{\vmo}{{\operatorname{VMO}}}
\newcommand{\Stop}{{\rm Stop}}
\newcommand{\ttt}{{\rm Top}}
\newcommand{\Tree}{{\rm Tree}}
\newcommand{\sss}{{\rm Stop}}
\newcommand{\HD}{{\mathrm {HD}}}
\newcommand{\LD}{{\mathrm{LD}}}
\newcommand{\BS}{{\mathrm{BS}}}
\newcommand{\LLD}{{\mathcal{LD}}}
\newcommand{\HHD}{{\mathcal{HD}}}
\newcommand{\BBS}{{\mathcal{BS}}}
\newcommand{\capp}{\operatorname{Cap}}
\def\Xint#1{\mathchoice
{\XXint\displaystyle\textstyle{#1}}%
{\XXint\textstyle\scriptstyle{#1}}%
{\XXint\scriptstyle\scriptscriptstyle{#1}}%
{\XXint\scriptscriptstyle\scriptscriptstyle{#1}}%
\!\int}
\def\XXint#1#2#3{{\setbox0=\hbox{$#1{#2#3}{\int}$ }
\vcenter{\hbox{$#2#3$ }}\kern-.58\wd0}}
\def\avint{\;\Xint-}
\newcommand{\Whit}{\mathsf{W}}
\newcommand{\av}[1]{\left| #1 \right|}
\newcommand{\ps}[1]{\left( #1 \right)}
\definecolor{ffffff}{rgb}{1.0,1.0,1.0}
\definecolor{qqqqff}{rgb}{0.0,0.0,1.0}
\definecolor{ffqqqq}{rgb}{1.0,0.0,0.0}
\definecolor{zzzzqq}{rgb}{0.6,0.6,0.0}
\definecolor{marronet}{rgb}{0.6,0.2,0}
\definecolor{negre}{rgb}{0,0,0}
\definecolor{vermell}{rgb}{0.8,0.05,0.05}
\definecolor{blau}{rgb}{0.3,0.2,1.}
\definecolor{blauclar}{rgb}{0.,0.,1.}
\definecolor{grisfosc}{rgb}{0.25098039215686274,0.25098039215686274,0.25098039215686274}
\definecolor{verd}{rgb}{0.1,0.6,0.1}
\definecolor{taronja}{rgb}{0.9,0.6,0.05}
\definecolor{vermellclar}{rgb}{1.,0.,0.}
\definecolor{verdet}{rgb}{0,0.8,0.1}
\definecolor{blauverd}{rgb}{0,0.4,0.2}
\definecolor{grisclar}{rgb}{0.6274509803921569,0.6274509803921569,0.6274509803921569}
\newtheorem{theorem}{Theorem}[section]
\newtheorem{lemma}[theorem]{Lemma}
\newtheorem*{theorem*}{Theorem}
\newtheorem*{theorema}{Theorem A}
\newtheorem*{theoremb}{Theorem B}
\theoremstyle{definition}
\theoremstyle{remark}
\newtheorem{rem}[theorem]{\bf Remark}
\numberwithin{equation}{section}
\newcommand{\brem}{\begin{rem}}
\newcommand{\erem}{\end{rem}}
\begin{document}

\title[The two-phase problem for harmonic measure in VMO]{The two-phase problem for harmonic measure in VMO and the chord-arc condition}

\author{Xavier Tolsa}

\address{ICREA, Barcelona\\
Dept. de Matem\`atiques, Universitat Aut\`onoma de Barcelona \\
and Centre de Recerca Matem\`atica, Barcelona, Catalonia.}
\email{xtolsa@mat.uab.cat}

\author{Tatiana Toro}
\address{Department of Mathematics, Box 354350, University of Washington, Seattle, WA 98195-4350, USA.}
\email{toro@uw.edu}

\thanks{X.T. was supported by the European Research Council (ERC) under the European Union's Horizon 2020 research and innovation programme (grant agreement 101018680). 
 Also partially supported by MICINN (Spain) under the grant PID2020-114167GB-I00, the María de Maeztu Program for units of excellence (Spain) (CEX2020-001084-M), and 2021-SGR-00071 (Catalonia).
 T.T. was partially supported by the Craig McKibben \& Sarah Merner
Professorship in Mathematics and NSF grant DMS-1954545.}

\begin{abstract}
Let $\Omega^+\subset\R^{n+1}$ be a bounded $\delta$-Reifenberg flat domain, with $\delta>0$ small enough, possibly with locally infinite surface measure. 
Assume also that $\Omega^-= \R^{n+1}\setminus \overline{\Omega^+}$ is an NTA domain as well and denote by $\omega^+$ and $\omega^-$ the respective harmonic measures of $\Omega^+$ and $\Omega^-$ with poles $p^\pm\in\Omega^\pm$. 
In this paper we show that the condition  that 
$\log\dfrac{d\omega^-}{d\omega^+} \in \vmo(\omega^+)$
is equivalent to $\Omega^+$ being a chord-arc domain with inner unit normal belonging to $\vmo(\HH^n|_{\partial\Omega^+})$.
\end{abstract}

\maketitle

\section{Introduction}

In this paper we study a two-phase problem for harmonic measure. In this type of problems one considers two disjoint domains $\Omega^+,\Omega^-\subset\R^{n+1}$ whose boundaries have non-empty intersection, and whose respective harmonic measures $\omega^+,\omega^-$ are usually mutually absolutely continuous in some subset of $\partial\Omega^+\cap \partial\Omega^-$. The objective is to relate 
 the analytic properties of the
density $\frac{d\omega^-}{d\omega^+}|_{\partial\Omega^+\cap \partial\Omega^-}$ to the geometric properties of $\partial\Omega^+\cap \partial\Omega^-$.
For example, the 
 works \cite{AMT-cpam}, \cite{AMTV} show that if $\omega^+$ and $\omega^-$ are mutually absolutely continuous in some
subset $E\subset \partial\Omega^+\cap\partial\Omega^-$, then the measures $\omega^+|_E$ and $\omega^-|_E$ are $n$-rectifiable measures, that is, they are concentrated in an $n$-rectifiable subset of $E$ and they are absolutely continuous with respect to the Hausdorff $n$-dimensional measure $\HH^n$.
Recall that a set $F\subset\R^{d}$ is called $n$-rectifiable if there are Lipschitz maps
$f_i:\R^n\to\R^d$, $i=1,2,\ldots$, such that 
\begin{equation}\label{eq001}
\HH^n\Big(F\setminus\textstyle\bigcup_i f_i(\R^n)\Big) = 0.
\end{equation}
See \cite{KPT09} for a previous partial related result where $\Omega^+$ and $\Omega^-$ are assumed to be complementary NTA domains (see Section
\ref{secprelim-nta} for the definition of NTA domain), and 
\cite{AM-blow} for a related work involving elliptic measure.

In other works of more quantitative nature, besides assuming that $\Omega^+$ and $\Omega^-$ are complementary NTA domains,
one asks also some quantitative condition about the density $\frac{d\omega^-}{d\omega^+}$. For example, in 
\cite{Kenig-Toro-crelle} the authors prove (among other results) that if $\log\frac{d\omega^-}{d\omega^+}\in \vmo(\omega^+)$ and moreover $\Omega^+$ is $\delta$-Reifenberg flat for some $\delta>0$ small enough, then $\Omega^+$ is vanishing Reifenberg flat (see again Section \ref{secprelim-nta} for the notions of $\delta$-Reifenberg flatness and 
 vanishing Reifenberg flatness). Later on, Engelstein  \cite{Engelstein} showed that if one strengthens the $\vmo$ condition on $\omega^+$ by 
asking $\log\frac{d\omega^-}{d\omega^+}\in C^\alpha$ for some $\alpha>0$ (still under 
the $\delta$-Reifenberg flat assumption) then the inner unit normal $N$ of $\Omega^+$ belongs to $C^\alpha$ and $\Omega^+$ is a $C^{1+\alpha}$ domain. On the other hand, in \cite{AMT-quantcpam} it is shown that the condition 
 $\omega^-\in A_\infty(\omega^+)$ 
is equivalent to the fact that  $\Omega^+$ and $\Omega^-$ have joint big pieces of chord-arc subdomains. See Section \ref{secprelim-nta} for the precise definition.
See also \cite{BH} for a related result involving the conditions $\log\frac{d\omega^+}{d\HH^n|_{\partial\Omega^+}}\in\vmo(\HH^n|_{\partial\Omega^+})$ and $\log\frac{d\omega^-}{d\HH^n|_{\partial\Omega^+}}\in\vmo(\HH^n|_{\partial\Omega^+})$; and see
the papers \cite{BET1} and \cite{BET2} for results about the structure of the singular set of the boundary
under the assumption that $\log\frac{d\omega^-}{d\omega^+}$ belongs either to $\vmo(\omega^+)$ or to $C^\alpha$.

In connection with the precise results that we obtain in this work, the following theorem of Prats and the first author of this paper is particularly relevant.

\begin{theorema}[\cite{Prats-Tolsa}] 
Let $\Omega^+\subset\R^{n+1}$ be a bounded NTA domain and let $\Omega^-= \R^{n+1}\setminus \overline{\Omega^+}$ be an NTA domain as well.
Denote by $\omega^+$ and $\omega^-$ the respective harmonic measures with poles $p^+\in\Omega^+$ and $p^-\in\Omega^-$. Suppose that $\Omega^+$ is a $\delta$-Reifenberg flat domain, with $\delta>0$ small enough.
Then the following conditions are equivalent:
\begin{itemize}
\item[(a)] $\omega^+$ and $\omega^-$ are mutually absolutely continuous and $\log\dfrac{d\omega^-}{d\omega^+} \in \vmo(\omega^+).$



\item[(b)] $\Omega^+$ is vanishing Reifenberg flat,  $\Omega^+$ and  $\Omega^-$ have joint big pieces of chord-arc subdomains, and 
\begin{equation}\label{eqnb*}
\lim_{\rho\to 0} \sup_{r(B)\leq \rho} \avint_B |N - N_{B}|\,d\omega^+ = 0, 
\end{equation}
where $N$ is the inner unit normal to $\partial\Omega^+$ and $N_B$ is the unit normal to the $n$-plane $L_B$ pointing to $\Omega^+$ and minimizing 
\begin{equation}\label{eqnb*2}
\max\Big\{\sup_{y\in \partial\Omega^+\cap B}\dist(y,L_B), \sup_{y\in L_B\cap B}\dist(y,\partial\Omega^+)\Big\}.
\end{equation}
\end{itemize}
\end{theorema}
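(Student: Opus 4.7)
The plan is to prove each implication separately, combining existing structural theorems with a blow-up/compactness analysis that supplies the new normal-vector information.

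For the implication (a) $\Rightarrow$ (b), two of the three conclusions follow from known results. Vanishing Reifenberg flatness of $\Omega^+$ is exactly the output of \cite{Kenig-Toro-crelle}, while the joint big chord-arc subpiece condition follows from the main theorem of \cite{AMT-quantcpam} once one notes that $\log\frac{d\omega^-}{d\omega^+}\in\vmo(\omega^+)$ implies $\omega^-\in A_\infty(\omega^+)$. The substantive new content is the VMO-type oscillation \eqref{eqnb*} of the inner unit normal $N$ with respect to $\omega^+$. I would argue by contradiction and compactness: assume there exist $c_0>0$ and balls $B_k$ centered on $\partial\Omega^+$ with $r(B_k)\to 0$ satisfying $\avint_{B_k}|N-N_{B_k}|\,d\omega^+\geq c_0$. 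Rescaling about the centers of $B_k$ and using vanishing Reifenberg flatness, the rescaled domains $\Omega^\pm_k$ converge in the local Hausdorff sense to complementary half-spaces with common boundary hyperplane $\Pi$. The VMO hypothesis forces the rescaled density $\frac{d\omega^-_k}{d\omega^+_k}$ to approach a constant on $\Pi$; combined with the symmetry of harmonic measures on complementary half-spaces, this pins down the weak-$*$ limit $\omega^+_\infty$ as a constant multiple of $\HH^n|_\Pi$, for which the normal is literally constant. Passing the oscillation $c_0$ to the limit via weak-$*$ convergence of $N\,d\omega^+_k$ yields the desired contradiction.

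For the implication (b) $\Rightarrow$ (a), the plan is to transfer VMO information from the big chord-arc subpieces to the full boundary. On any chord-arc subdomain $\wt\Omega^\pm\subset\Omega^\pm$ provided by the hypothesis, the oscillation estimate \eqref{eqnb*} (together with vanishing Reifenberg flatness and Dahlberg's $A_\infty$-comparability of $\omega^\pm$ and $\HH^n|_{\partial\wt\Omega^\pm}$) transfers into the statement that the inner normal of $\partial\wt\Omega^\pm$ lies in $\vmo(\HH^n|_{\partial\wt\Omega^\pm})$. Kenig-Toro's theorem on Poisson kernels in chord-arc domains with VMO normal then yields $\log\frac{d\wt\omega^\pm}{d\HH^n|_{\partial\wt\Omega^\pm}}\in\vmo$. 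On the portion of $\partial\wt\Omega^+\cap\partial\wt\Omega^-$ that coincides with $\partial\Omega^+\cap\partial\Omega^-$, maximum-principle comparisons let one relate $\wt\omega^\pm$ to $\omega^\pm$ and deduce VMO control of $\log\frac{d\omega^-}{d\omega^+}$ on a set of $\omega^+$-density close to $1$ at every scale. A standard covering/gluing argument, exploiting that the big chord-arc pieces exhaust $\partial\Omega^+$ in $\omega^+$-density uniformly, then extends the VMO estimate to all of $\partial\Omega^+$.

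The main obstacle I anticipate lies in executing the blow-up analysis for (a) $\Rightarrow$ (b). The hypothesis controls averages against $\omega^+$ rather than against surface measure, so extracting clean half-space limits and transferring the oscillation quantity $|N-N_{B_k}|$ through the rescaling demands uniform quantitative estimates (doubling, non-degeneracy, interior Harnack) for NTA harmonic measures at all small scales, and the identification of $\omega^+_\infty$ with surface measure on $\Pi$ must be arranged so that control of $N$ against $\omega^+_k$ persists as control against $\HH^n|_\Pi$ in the limit. In (b) $\Rightarrow$ (a), the analogous delicate point is the gluing step: ensuring that the leftover regions between big chord-arc subpieces, while $\omega^+$-small at each fixed scale, do not accumulate enough oscillation to spoil the global $\vmo$ estimate as the scale shrinks.
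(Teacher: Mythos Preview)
The paper does not contain a proof of Theorem~A: this theorem is quoted verbatim as a result of Prats and Tolsa \cite{Prats-Tolsa} and serves only as background for the paper's new Theorem~\ref{teo2}. Consequently there is no ``paper's own proof'' of Theorem~A to compare your proposal against.

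That said, your outline is broadly consistent with the architecture one would expect, and indeed the paper itself confirms two of your ingredients for (a)~$\Rightarrow$~(b): it explicitly attributes the vanishing Reifenberg flatness to \cite{Kenig-Toro-crelle} and the joint big pieces of chord-arc subdomains to \cite{AMT-quantcpam} (via $\omega^-\in A_\infty(\omega^+)$). The paper offers no hint about how the oscillation estimate \eqref{eqnb*} or the reverse implication (b)~$\Rightarrow$~(a) are obtained in \cite{Prats-Tolsa}, so your blow-up/compactness sketch and your gluing scheme cannot be checked against anything here. If you want a genuine comparison you will have to consult \cite{Prats-Tolsa} directly.
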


\vv
Notice that the condition (b) above provides a geometric characterization of the analytic condition $\log\dfrac{d\omega^-}{d\omega^+} \in \vmo(\omega^+)$, under the assumption that $\Omega^+$ is a $\delta$-Reifenberg flat domain, with $\delta>0$ small enough.
Remark that the Reifenberg flatness condition on the domain is necessary in the theorem above. This can be easily seen by taking a suitable smooth truncation of the cone $\Omega^+ = \{(x_1,x_2,x_3,x_4)\in\R^4:x_1^2+x_2^2 < x_3^2+x_4^2\}$, for which the harmonic measures $\omega^+$ and $\omega^-$ with pole at $\infty$ coincide.

The main result of the current paper is the following.

\begin{theorem} \label{teo2}
Let $\Omega^+\subset\R^{n+1}$ be a bounded NTA domain and let $\Omega^-= \R^{n+1}\setminus \overline{\Omega^+}$ be an NTA domain as well.
Denote by $\omega^+$ and $\omega^-$ the respective harmonic measures with poles $p^+\in\Omega^+$ and $p^-\in\Omega^-$. Suppose that $\Omega^+$ is a $\delta$-Reifenberg flat domain, with $\delta>0$ small enough.
Then the conditions (a) and (b) in Theorem A are equivalent to the following:
\begin{itemize}
\item[(c)]  $\Omega^+$ and  $\Omega^-$ have joint big pieces of chord-arc subdomains and \rf{eqnb*} holds.

\item[(d)] $\Omega^+$ is a chord-arc domain and the inner unit normal $N$ to $\partial\Omega^+$ belongs to $\vmo(\HH^n|_{\partial\Omega^+})$.
\end{itemize}
\end{theorem}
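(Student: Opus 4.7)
The proof plan is to close the cycle $(b) \Rightarrow (c) \Rightarrow (d) \Rightarrow (a)$, which together with Theorem A yields all four equivalences.

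The implication $(b) \Rightarrow (c)$ is trivial, since (c) is obtained from (b) simply by removing the vanishing Reifenberg flatness condition. For $(d) \Rightarrow (a)$ I would invoke the Kenig--Toro theory of regular SKT (chord-arc with VMO normal) domains. Under (d), $\Omega^+$ is chord-arc with $N \in \vmo(\HH^n|_{\partial\Omega^+})$, hence by \cite{Kenig-Toro-crelle}, $\omega^+$ is mutually absolutely continuous with $\HH^n|_{\partial\Omega^+}$ and $\log\frac{d\omega^+}{d\HH^n|_{\partial\Omega^+}} \in \vmo(\HH^n|_{\partial\Omega^+})$. Since $\Omega^-$ has inner unit normal $-N$, also in VMO, the analogous statement holds for $\omega^-$. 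Subtracting the two logarithms gives $\log\frac{d\omega^-}{d\omega^+} \in \vmo(\HH^n|_{\partial\Omega^+})$, and because $d\omega^+/d\HH^n|_{\partial\Omega^+}$ has VMO logarithm, for bounded functions $\vmo(\omega^+)$ and $\vmo(\HH^n|_{\partial\Omega^+})$ coincide, yielding (a).

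The principal new step is $(c) \Rightarrow (d)$. By \cite{AMT-quantcpam}, the joint big pieces of chord-arc subdomains hypothesis implies that $\partial\Omega^+$ is Ahlfors $n$-regular and that $\omega^\pm$ are $A_\infty$-equivalent to $\HH^n|_{\partial\Omega^+}$; together with the NTA assumption, $\Omega^+$ is therefore already a chord-arc domain. It remains to show $N \in \vmo(\HH^n|_{\partial\Omega^+})$. The plan is a two-step bootstrap. First, use \eqref{eqnb*} together with the $\delta$-Reifenberg flatness to deduce vanishing Reifenberg flatness of $\partial\Omega^+$: smallness of the $\omega^+$-mean of $|N - N_B|$ on small balls forces the boundary to track the approximating $n$-plane $L_B$ more and more closely as $r(B) \to 0$, driving the quantity in \eqref{eqnb*2} to zero. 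This promotes (c) to (b), and by Theorem A to (a), so $\log\frac{d\omega^-}{d\omega^+} \in \vmo(\omega^+)$. Second, combining (a) with the $A_\infty$-equivalence $\omega^\pm \sim \HH^n|_{\partial\Omega^+}$, a two-weight argument in the spirit of \cite{BH} upgrades the VMO of the ratio to $\log\frac{d\omega^\pm}{d\HH^n|_{\partial\Omega^+}} \in \vmo(\HH^n|_{\partial\Omega^+})$. With this, the density of $\omega^+$ with respect to $\HH^n|_{\partial\Omega^+}$ has VMO logarithm, so \eqref{eqnb*} transfers to the analogous condition with respect to $\HH^n|_{\partial\Omega^+}$, which for the bounded function $N$ is equivalent to $N \in \vmo(\HH^n|_{\partial\Omega^+})$.

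The main obstacle lies in the two sub-steps just outlined. The first, passing from \eqref{eqnb*} to vanishing Reifenberg flatness, must quantitatively connect the $\omega^+$-weighted oscillation of $N$ to the Jones-type $\beta$-numbers of $\partial\Omega^+$, which requires careful use of the $A_\infty$ relation and doubling to convert measure-theoretic control of the normal into geometric flatness of the boundary. The second, deriving VMO of each $\log\frac{d\omega^\pm}{d\HH^n|_{\partial\Omega^+}}$ from VMO of the ratio $d\omega^-/d\omega^+$, must be carried out without a circular appeal to the very conclusion $N \in \vmo(\HH^n|_{\partial\Omega^+})$, and thus requires a genuine two-weight VMO argument relying only on the chord-arc and Reifenberg structure already in hand.
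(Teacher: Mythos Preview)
Your proposal for $(b)\Rightarrow(c)$ and $(d)\Rightarrow(a)$ is essentially what the paper does. The serious problem is in your $(c)\Rightarrow(d)$.

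You assert that, by \cite{AMT-quantcpam}, the joint big pieces of chord-arc subdomains hypothesis already implies that $\partial\Omega^+$ is Ahlfors $n$-regular. This is false, and it is precisely the point of the theorem. What \cite{AMT-quantcpam} proves is that joint big pieces of chord-arc subdomains is \emph{equivalent} to $\omega^-\in A_\infty(\omega^+)$; it does \emph{not} yield any control on $\HH^n|_{\partial\Omega^+}$. In fact, as the paper recalls in the introduction, \cite{AMT-quantcpam} constructs two-sided NTA domains $\Omega^\pm$ with $\omega^-\in A_\infty(\omega^+)$ (hence with joint big pieces of chord-arc subdomains) whose boundary has non-$\sigma$-finite $\HH^n$ measure. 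So your first sentence in the $(c)\Rightarrow(d)$ paragraph assumes away the entire content of the result. The paper devotes the bulk of its work (a corona decomposition, stopping-time argument with families $\HHD^+$, $\LLD^+$, $\BBS$, construction of an approximating Lipschitz graph $\Gamma$, auxiliary Lipschitz domains $\wt U^\pm$, and an application of Dahlberg's theorem) precisely to proving the upper AD-regularity bound $\HH^n(B(x,r)\cap\partial\Omega^+)\lesssim r^n$ from (c).

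Once AD-regularity is in hand, the paper's route to $N\in\vmo(\sigma)$ is also much more direct than your two-step bootstrap: chord-arc gives $\omega^+\in A_\infty(\sigma)$, and then a single reverse H\"older inequality applied to \eqref{eqnb*} gives $\avint_B|N-N_B|\,d\sigma\to 0$, hence $N\in\vmo(\sigma)$. There is no need to pass through (a), vanishing Reifenberg flatness, or the individual densities $\log\frac{d\omega^\pm}{d\sigma}$. Your second sub-step, recovering $\vmo$ of each $\log\frac{d\omega^\pm}{d\sigma}$ from $\vmo$ of their difference, is in any case not a valid inference without further input, so even granting AD-regularity your argument would not close.
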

\vv

Recall that a chord-arc domain in $\R^{n+1}$ is an NTA domain with $n$-AD regular boundary (see \rf{eqAD1}).
It is trivial that (b) from Theorem A implies (c). On the other hand,
the fact that the condition (d) above implies (a) from Theorem A is essentially known. 
See Section \ref{secac} for more details. 
 So the main novelty lies in the fact that (c) implies (d).
 
Notice that neither the assumptions in Theorem \ref{teo2} nor the conditions (a) and (b) in Theorem A contain the fact the surface measure $\HH^n|_{\partial\Omega^+}$ is locally finite. 
So the conclusion that $\Omega^+$ is chord-arc, and thus with locally finite surface measure
 may appear rather surprising at first glance. Indeed, this result contrasts with the fact that, as shown in \cite{AMT-quantcpam}, there are two-sided NTA domains $\Omega^\pm$ with non-$\sigma$-finite surface measure such that  $\omega^-\in A_\infty(\omega^+)$.

It is worth comparing the results from Theorems A and \ref{teo2} with previous results obtained by Kenig and the second author of this paper in connection with the one-phase problem for harmonic measure in chord-arc domains.
By combining some of the results from 
 \cite{Kenig-Toro-duke}, \cite{Kenig-Toro-annals}, \cite{Kenig-Toro-AENS}, one gets the following.

\begin{theoremb}
Let $\Omega\subset\R^{n+1}$ be a bounded chord-arc domain which is $\delta$-Reifenberg flat, with $\delta>0$ small enough.
Denote by $\omega$ the harmonic measure in $\Omega$ with pole $p\in\Omega$ and write $\sigma=\HH^n|_{\partial\Omega}$. Then the following conditions are equivalent:
\begin{itemize}
\item[(i)] $\log\dfrac{d\omega}{d\sigma} \in \vmo(\sigma).$

\item[(ii)] $\Omega$ is vanishing Reifenberg flat and the inner normal $N$ to $\partial\Omega$ belongs to $\vmo(\sigma)$.

\item[(iii)] The inner normal $N$ to $\partial\Omega$ exists $\sigma$-a.e.\ and it belongs to $\vmo(\sigma)$.

\end{itemize}
\end{theoremb}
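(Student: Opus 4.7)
The plan is to establish the cycle (i) $\Rightarrow$ (ii) $\Rightarrow$ (iii) $\Rightarrow$ (i). The implication (ii) $\Rightarrow$ (iii) is almost free: vanishing Reifenberg flatness for an AD-regular boundary (which we have because $\Omega$ is chord-arc) gives an approximate tangent plane at $\sigma$-almost every point, so $N$ exists $\sigma$-a.e., and the $\vmo(\sigma)$ statement is exactly the second clause of (ii). The remaining two directions are where analytic VMO information and geometric VMO information must be converted into one another.

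For (i) $\Rightarrow$ (ii) I would argue by blow-up. Fix $x\in\partial\Omega$ and a sequence $r_k\to 0$, and consider the rescaled triples $(\Omega_{x,r_k},\sigma_{x,r_k},\omega_{x,r_k})$ with $\Omega_{x,r_k}=(\Omega-x)/r_k$ and the two measures suitably normalized. The $\delta$-Reifenberg flatness passes to subsequential limits, producing a Reifenberg flat tangent domain $\Omega_\infty$, and the hypothesis $\log\frac{d\omega}{d\sigma}\in\vmo(\sigma)$ forces $\log\frac{d\omega_\infty}{d\sigma_\infty}$ to be constant on $\partial\Omega_\infty$. A rigidity argument (of the type used in the Kenig--Toro Duke paper) then identifies $\Omega_\infty$ with a half-space, which is exactly vanishing Reifenberg flatness for $\Omega$. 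The same compactness scheme, combined with $n$-AD regularity of $\sigma$, forces the averages $\avint_B N\,d\sigma$ to converge at every boundary point with the correct rate, hence $N\in\vmo(\sigma)$.

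For (iii) $\Rightarrow$ (i) I would proceed in two stages. First, I upgrade (iii) to (ii) by a Semmes-type argument: if $N\in\vmo(\sigma)$ on a chord-arc domain then on small balls $B$ the normal $N$ is close in $L^1(\sigma)$ to its average $N_B$, and AD-regularity lets one convert this $L^1$ closeness of normals into Hausdorff closeness of $\partial\Omega\cap B$ to the affine plane through the center of $B$ with normal $N_B$, giving vanishing Reifenberg flatness. Second, I deduce $\log\frac{d\omega}{d\sigma}\in\vmo(\sigma)$ by comparing, at each small scale and each boundary ball $B$, the Poisson kernel of $\Omega$ with that of the half-space through the center of $B$ with normal $N_B$; on the half-space the Poisson kernel at a fixed pole is essentially constant across $B$, and the deviation coming from perturbing the half-space to $\Omega$ is controlled by the vanishing Reifenberg flatness constant of $\Omega$ in $B$ together with the oscillation of $N$ on $B$, both of which tend to $0$ as $r(B)\to 0$.

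The main obstacle is this last comparison step. Turning the smallness of $\|N-N_B\|_{L^1(B,\sigma)}$ and of the Reifenberg flatness constant into a genuine VMO bound on $\log(d\omega/d\sigma)$ requires quantitative perturbation theory for harmonic measure on chord-arc domains with small constant, together with Dahlberg-type nontangential estimates that behave well in the vanishing regime; this is where the real technical work lies, and it is the content of the Kenig--Toro \emph{Annals} paper.
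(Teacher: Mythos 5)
The paper does not prove Theorem~B at all: it is stated as a known result assembled from three earlier Kenig--Toro papers, with the proof deferred to \cite{Kenig-Toro-duke}, \cite{Kenig-Toro-annals}, \cite{Kenig-Toro-AENS}. So there is no internal proof for you to be compared against, and your sketch is effectively a summary of what those cited papers do.

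Your high-level decomposition (i) $\Rightarrow$ (ii) $\Rightarrow$ (iii) $\Rightarrow$ (i) is the right one, and the mechanisms you name are essentially the correct ones: blow-up/compactness for (i) $\Rightarrow$ (ii), a Semmes-type ``small BMO normal implies small Reifenberg flatness constant'' argument to lift (iii) to (ii), and a quantitative Poisson-kernel comparison to close the loop. Two points worth flagging. First, in (i) $\Rightarrow$ (ii) the blow-up argument at a single $x\in\partial\Omega$ produces only subsequential, scale-by-scale flatness; the $\vmo(\sigma)$ conclusion for $N$ requires a \emph{uniform} rate over all boundary points, which in Kenig--Toro is obtained by a contradiction/compactness argument over the full family of boundary balls rather than pointwise convergence, and your phrasing ``converge at every boundary point with the correct rate'' glosses over exactly that uniformity. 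Second, a minor misattribution: the heavy lifting for (iii) $\Rightarrow$ (i) lives in the AENS 2003 paper (``Poisson kernel characterization of Reifenberg flat chord arc domains''), not the Annals 1999 paper; the latter is where (i) $\Rightarrow$ (ii) is carried out. Other than that your outline matches the literature the present paper is pointing to.
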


Notice the analogies between the conditions (a), (b), (c) from Theorems A and \ref{teo2} with the preceding conditions (i), (ii), (iii).
In fact, in view of these results, it is natural to ask if the conditions (a), (b), (c), (d) from Theorems A and \ref{teo2} are equivalent  to the fact that the inner normal $N$ to $\partial\Omega^+$ (which exists $\omega^+$-a.e.) belongs to $\vmo(\omega^+)$
together with the vanishing Reifenberg flatness of $\Omega_+$.
Indeed, we remark that the condition \rf{eqnb*} implies that $N\in\vmo(\omega^+)$.
For the moment, the converse implication is an open problem.

To prove the main implication of Theorem \ref{teo2}, namely that $\Omega^+$ is a chord-arc domain with $N\in\vmo(\sigma)$ if (c) holds, we will use a geometric corona decomposition which will allow us to estimate the surface measure $\HH^n|_{\partial\Omega^+}$. We will split some dyadic lattice of ``cubes'' from $\partial\Omega^+$ into trees, so that, in each tree, $\partial\Omega^+$ is well approximated
by a Lipschitz graph at the location and scales of the cubes from the tree. A key step consists in showing that each tree does not have  too many stopping cubes (see \rf{eqstopr0} in Lemma \ref{keylemma}). 



\vv


\section{Preliminaries}\label{secprelim}

We denote by $C$ or $c$ some constants that may depend on the dimension and perhaps other fixed parameters. Their value may change at different occurrences. On the contrary, constants with subscripts, like $C_0$, retain their values.
For $a,b\geq 0$, we write $a\lesssim b$ if there is $C>0$ such that $a\leq Cb$. We write $a\approx b$ to mean $a\lesssim b\lesssim a$. If we want to emphasize that the implicit constant depends on some parameter $\eta\in\R$, we write $a\approx_\eta b$

\subsection{Measures and rectifiability}
\label{secprelim1}
All measures in this paper are assumed to be Borel measures.
A measure $\mu$ in $\R^d$ is called {\em doubling} if there is some constant $C>0$ such that
$$\mu(B(x,2r))\leq C\,\mu(B(x,r))\quad \mbox{ for all $x\in\supp\mu$.}$$
The measure $\mu$ is called {\em $n$-AD regular} (or $n$-Ahlfors-David regular) if
\begin{equation}\label{eqAD1}
C^{-1}r^n\leq \mu(B(x,r))\leq C r^n \quad \mbox{ for all $x\in\supp\mu$ and $0<r\leq \diam(\supp\mu)$.}
\end{equation}
Note that $n$-AD regular measures are doubling.
A set $E\subset \R^d$ is called $n$-AD regular if $\HH^n|_E$ is $n$-AD regular. In case that $\mu$ satisfies the second inequality in \rf{eqAD1}, but not necessarily the first one, we say that $\mu$ has $n$-polynomial growth. We write ``$C$-doubling" or ``$C$-n-AD regular'' if we wish to mention the constant $C$ involved in the definitions. 

A  set $E\subset \R^d$ is called $n$-rectifiable if there are Lipschitz maps $f_i:\R^n\to\R^d$ such that
$$\HH^n\Big(E\setminus \textstyle\bigcup_i f_i(\R^n)\Big)=0.$$
Analogously, one says that a measure $\mu$ is 
{\em $n$-rectifiable}
if there are Lipschitz maps
$f_i:\R^n\to\R^d$, $i=1,2,\ldots$, such that 
\begin{equation}\label{eq001'}
\mu\Bigl(\R^d\setminus\textstyle\bigcup_i f_i(\R^n)\Big) = 0,
\end{equation} 
and moreover $\mu$ is absolutely continuous with respect to $\HH^n$. An equivalent definition for rectifiability of sets and measures is obtained if we replace Lipschitz images of $\R^n$ by possibly rotated $n$-dimensional graphs of $C^1$ functions.

A measure $\mu$ in $\R^d$ is called {\em uniformly $n$-rectifiable} (UR) if it is $n$-AD-regular and
there exist constants $\theta, M >0$ such that for all $x \in \supp\mu$ and all $0<r\leq \diam(\supp\mu)$ 
there is a Lipschitz mapping $g$ from the ball $B_n(0,r)$ in $\R^{n}$ to $\R^d$ with $\text{Lip}(g) \leq M$ such that
$$
\mu(B(x,r)\cap g(B_{n}(0,r)))\geq \theta r^{n}.$$
A set $E$ is called uniformly  $n$-rectifiable if the measure $\HH^n|_E$ is uniformly $n$-rectifiable.
The notion of uniform $n$-rectifiability is a quantitative version of $n$-rectifiability introduced 
by David and Semmes (see \cite{DS1}). It is easy to check that uniform $n$-rectifiability implies $n$-rectifiability.

Given two doubling measures $\mu$, $\nu$ in $\R^d$ with the same support, we write $\nu\in A_\infty(\mu)$
if there are constants $\delta,\ve\in (0,1)$ such that,
for any ball $B$ centered in $\supp\mu$ and any Borel set $E\subset B$, the following holds:
$$\mu(E) \geq \delta\,\mu(B)\quad \Rightarrow\quad 
\nu(E)\geq \ve\,\nu(B).$$

\vv


\subsection{Harmonic measure}

 For a bounded domain $\Omega \subset \R^{n+1}$ and $x \in \Omega$, one can construct the harmonic measure $\omega^x_\Omega$ (see  \cite[p. 217]{Hel}, for example). In fact, for any continuous function $f$, the Perron solution  for the boundary function $f$ is given by 
$$ H_f(x) = \int_{\partial \Omega} f(y) \,d\omega_\Omega^x(y),$$
 Remark  that constant functions are continuous and  since $H_1(x)=1$, for any $x \in \Omega$, we have that $\omega_\Omega^x(\partial \Omega)=1$, for any $x \in \Omega$.

Let $\EE$ denote the fundamental solution for the Laplace equation in $\R^{n+1}$, for $n\geq2$, so that $\mathcal{E}(x)=c_n\,|x|^{1-n}$ for $n\geq 2$, $c_n>0$.
The Newtonian potential of a measure $\mu\in M_+(\R^{n+1})$ is defined by
$$U\mu(x) = \EE * \mu(x),$$
and the Newtonian capacity of a compact set $F\subset\R^{n+1}$ equals
$$\capp(F)=\sup\big\{\mu(F):\mu\in M_+(\R^{n+1}),\,\supp\mu\subset F,\|U\mu\|_{\infty}\leq1\big\},$$
where $M_+(\R^{n+1})$ is the space of finite Borel measures from $\R^{n+1}$.

The following result is standard. See \cite[Lemma 2.1]{Tolsa-sing} for example.

\begin{lemma}
\label{lembourgain}
 Given $n\geq2$, let $\Omega\subset \R^{n+1}$ be open and let $B$ be a closed ball centered at $\partial\Omega$. Then 
\[ \omega_\Omega^{x}(B)\geq c(n) \frac{\capp(\tfrac14 B\setminus\Omega)}{r(B)^{n-1}}\quad \mbox{  for all }x\in \tfrac14 B\cap \Omega ,\]
with $c(n)>0$.
\end{lemma}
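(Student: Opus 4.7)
The plan is to run the classical Bourgain-type comparison between $\omega^x_\Omega$ and the Newtonian potential of an equilibrium measure of $F:=\tfrac{1}{4}B\setminus\Omega$. We may assume $\capp(F)>0$; by definition of capacity, pick a measure $\mu$ with $\supp\mu\subset F$, $\|U\mu\|_\infty\leq 1$, and $\mu(F)\geq\tfrac{1}{2}\capp(F)$. Since $F\cap\Omega=\emptyset$, the potential $U\mu$ is bounded by $1$, lower semicontinuous on $\R^{n+1}$, and harmonic in $\Omega$.

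Two elementary one-line estimates on $U\mu$ produce matching bounds with a favorable gap. For $x\in\tfrac{1}{4}B$ and $z\in\supp\mu\subset\tfrac{1}{4}B$ one has $|x-z|\leq\tfrac{1}{2}r(B)$, hence
$$
U\mu(x)\geq c_n\,2^{n-1}\,\frac{\mu(F)}{r(B)^{n-1}}.
$$
For $y\in\R^{n+1}\setminus B$ and $z\in\tfrac{1}{4}B$, instead, $|y-z|\geq\tfrac{3}{4}r(B)$, so $U\mu$ is continuous at such $y$ and
$$
U\mu(y)\leq c_n\,(4/3)^{n-1}\,\frac{\mu(F)}{r(B)^{n-1}}.
$$
The key observation is that $2^{n-1}>(4/3)^{n-1}$ for $n\geq 2$, which will yield a strictly positive constant.

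Next I would convert the second pointwise bound into one involving $\omega^x_\Omega(B)$ via the Perron upper-solution principle applied to the bounded harmonic function $U\mu$ on $\Omega$. Define the bounded Borel function
$$
f(y):=\mathbf{1}_{B}(y)+c_n(4/3)^{n-1}\frac{\mu(F)}{r(B)^{n-1}}\,\mathbf{1}_{\R^{n+1}\setminus B}(y),\qquad y\in\partial\Omega.
$$
At every $y\in\partial\Omega$ one has $\limsup_{z\to y,\,z\in\Omega}U\mu(z)\leq f(y)$: on $\partial\Omega\cap B$ this follows from the global bound $U\mu\leq 1=f(y)$, and on $\partial\Omega\setminus B$ from the continuity of $U\mu$ together with the second estimate above. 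The upper-solution property then gives
$$
U\mu(x)\leq \int_{\partial\Omega} f\,d\omega^x_\Omega\leq \omega^x_\Omega(B)+c_n(4/3)^{n-1}\,\frac{\mu(F)}{r(B)^{n-1}}.
$$

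Combining with the lower bound for $U\mu(x)$ and using $\mu(F)\geq\tfrac{1}{2}\capp(F)$,
$$
\omega^x_\Omega(B)\geq c_n\bigl(2^{n-1}-(4/3)^{n-1}\bigr)\frac{\mu(F)}{r(B)^{n-1}}\geq c(n)\frac{\capp(F)}{r(B)^{n-1}},
$$
as desired. The only delicate step is the Perron comparison at boundary points of $\Omega$ that lie in $F$, where $U\mu$ may fail to be continuous; this is sidestepped entirely by using the trivial bound $U\mu\leq 1=f$ throughout $B$, so no regularity hypothesis on $\partial\Omega$ near such points is needed, which is what makes the estimate valid for an arbitrary open set $\Omega$.
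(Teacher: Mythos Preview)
Your argument is correct and is precisely the classical Bourgain estimate; the paper does not supply its own proof but simply cites it as standard (referring to \cite[Lemma 2.1]{Tolsa-sing}), and the argument there is the same potential-theoretic comparison you give. One small remark: if $\Omega$ is unbounded you should also note that $U\mu(x)\to 0$ as $|x|\to\infty$, so the Perron lower-class condition is satisfied at infinity as well; otherwise the proof is complete as written.
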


A similar result holds in the case $n=1$, involving logarithmic capacity instead of Newtonian capacity.

\vv


\subsection{NTA and Reifenberg flat domains}
\label{secprelim-nta}

 Given $\Omega\subset \mathbb{R}^{n+1}$ and $C\geq 2$, 
we say that $\Omega$ satisfies the {\it $C$-Harnack chain condition} if 
for every $\rho>0$, $k\geq 1$, and every pair of points
$x,y \in \Omega$ 
with $\dist(x,\partial\Omega),\,\dist(y,\partial\Omega) \geq\rho$ and 
$|x-y|<2^k\rho$, there is a chain of
open balls
$B_1,\dots,B_m \subset \Omega$, $m\leq C\,k$,
with $x\in B_1,\, y\in B_m,$ $B_k\cap B_{k+1}\neq \varnothing$
and $C^{-1}\diam (B_k) \leq \dist (B_k,\partial\Omega)\leq C\,\diam (B_k).$  The chain of balls is called
a {\it Harnack chain}. Note that if such a chain exists, then any positive harmonic function $u:\Omega\to\R$ satisfies 
\[u(x)\approx u(y),\]
with the implicit constant depending on $m$ and $n$. We say that $\Omega$ is a {\it $C$-corkscrew domain} if for every $\xi\in \partial\Omega$ and $r\in(0,\diam(\partial\Omega))$ there is a ball of radius $r/C$ contained in $B(\xi,r)\cap \Omega$. If $B(x,r/C)\subset B(\xi,r)\cap \Omega$, we call $x$ a {\em corkscrew point} for the ball $B(\xi,r)$. Finally, we say that $\Omega$ is {\it $C$-non-tangentially accessible (or $C$-NTA, or just NTA)} if it satisfies the $C$-Harnack chain condition and both $\Omega$ and $\R^{n+1}\setminus \overline \Omega$ are $C$-corkscrew domains.
 We say that $\Omega$ is 
 {\it $C$-chord-arc} if,
additionally, $\partial\Omega$ is $C$-$n$-AD-regular.  Also, $\Omega$ is
{\it two-sided $C$-NTA} if both $\Omega$ and $\Omega_{\rm {ext}}:=(\overline{\Omega})^{c}$ are $C$-NTA.

NTA domains were introduced by Jerison and Kenig in \cite{Jerison-Kenig}. In that work, the behavior of harmonic measure in this type of domains was studied in detail. Among other results, the authors showed that harmonic measure 
is doubling in NTA domains, and its support
coincides with the whole boundary. 
They also proved that harmonic measure in these domains satisfies the following important change of pole formula.

\begin{theorem}\label{teounif}
Let $n\geq 1$, $\Omega$ be a $C$-NTA domain in $\R^{n+1}$ and let $B$ be a ball centered in $\partial\Omega$.
Let $p_1,p_2\in\Omega$ be such that $\dist(p_i,B\cap \partial\Omega)\geq c_0^{-1}\,r(B)$ for $i=1,2$.
Then, for any Borel set $E\subset B\cap\partial\Omega$,
\begin{equation}\label{eqchangepole}
\frac{\omega^{p_1}(E)}{\omega^{p_1}(B)}\approx \frac{\omega^{p_2}(E)}{\omega^{p_2}(B)},\end{equation}
with the implicit constant depending only on $n$, $c_0$ and $C$. 
\end{theorem}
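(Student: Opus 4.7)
The plan is to anchor both ratios $\omega^{p_i}(E)/\omega^{p_i}(B)$ to a single pole-independent reference quantity. Let $x_B \in \Omega \cap B$ be a corkscrew point for $B$ provided by the NTA condition on $\Omega$, so $\dist(x_B, \partial\Omega) \gtrsim r(B)$. The corkscrew condition on the complement $\R^{n+1} \setminus \overline{\Omega}$ places a ball of radius $\approx r(B)$ inside $\tfrac14 B \setminus \Omega$, hence Lemma \ref{lembourgain} (or its logarithmic counterpart when $n = 1$) yields $\omega^{x_B}(B) \approx 1$. The theorem then reduces to the key comparison
\[
\omega^x(F) \approx \omega^{x_B}(F) \cdot \omega^x(B) \qquad (\ast)
\]
for every $x \in \Omega$ with $\dist(x, B \cap \partial\Omega) \geq c_0^{-1} r(B)$ and every Borel set $F \subset B \cap \partial\Omega$: setting $F = E$ and dividing $(\ast)$ for $x = p_1$ by $(\ast)$ for $x = p_2$ produces \rf{eqchangepole}.

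I would prove $(\ast)$ by splitting on the location of $x$. If $|x - x_B| \lesssim r(B)$ and $\dist(x, \partial\Omega) \gtrsim r(B)$, the $C$-Harnack chain condition yields a bounded-length chain from $x$ to $x_B$, so Harnack's inequality gives $\omega^x(F) \approx \omega^{x_B}(F)$ and $\omega^x(B) \approx 1$, which is $(\ast)$. Otherwise I may assume $x \in \Omega \setminus 2B$ (the remaining annular region inside $2B$ but close to $\partial\Omega \setminus B$ is absorbed by a local Harnack-chain argument near boundary points in $2B \setminus B$). Working in $\Omega' = \Omega \setminus \overline{2B}$, the two functions $u(y) = \omega^y(F)$ and $v(y) = \omega^y(B)$ are positive harmonic in $\Omega'$ and vanish on the regular points of $\partial\Omega \setminus 2B$. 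The maximum principle applied to $u - K\,\omega^{x_B}(F)\, v$ in $\Omega'$ reduces $(\ast)$ to the pointwise bound $u(y) \leq K\,\omega^{x_B}(F)\, v(y)$ on the inner boundary $\partial(2B) \cap \Omega$, for a suitable constant $K = K(n, C, c_0)$. On the subset where $\dist(y, \partial\Omega) \gtrsim r(B)$ this is immediate from Harnack chains back to $x_B$; for $y$ close to some $\xi \in \partial\Omega \cap 3B$, one invokes a local boundary Harnack-type principle in a small ball centered at $\xi$. The reverse inequality in $(\ast)$ comes from the symmetric comparison.

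The main obstacle is the uniform inner-boundary comparison on $\partial(2B) \cap \Omega$ at points close to $\partial\Omega$, which requires the boundary Harnack principle for NTA domains. This is the technical heart of Jerison--Kenig's original treatment, derivable from the corkscrew and Harnack-chain axioms together with Lemma \ref{lembourgain}, but care is needed to avoid circular reasoning, since in the NTA setting doubling of $\omega$ and the change-of-pole formula itself are typically established in parallel with this boundary Harnack principle.
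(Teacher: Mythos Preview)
The paper does not give a proof of this theorem; it simply refers to Lemma~4.11 of Jerison--Kenig \cite{Jerison-Kenig} (and to \cite{MT} for an extension). Your sketch is essentially the Jerison--Kenig argument: anchor both poles to a corkscrew point $x_B$ for $B$, use Bourgain's estimate (Lemma~\ref{lembourgain}) to get $\omega^{x_B}(B)\approx 1$, and then reduce everything to a comparison of two positive harmonic functions vanishing on $\partial\Omega\setminus 2B$, which is precisely the boundary Harnack principle for NTA domains. You correctly flag that the boundary Harnack principle is the real content here and that one must be careful about logical dependencies; in \cite{Jerison-Kenig} the boundary Harnack principle (their Lemma~4.10) is established first, from the corkscrew and Harnack-chain axioms together with the Carleson estimate, and the change-of-pole formula is then an immediate corollary, so there is no circularity. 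One minor point: your treatment of poles $p_i$ lying in the annulus $2B\setminus B$ but close to $\partial\Omega\setminus B$ cannot be done by Harnack chains alone (since $\dist(p_i,\partial\Omega)$ may be small); this case also requires the boundary Harnack principle, applied in a ball of radius $\approx r(B)$ centered at the nearby boundary point, exactly as you do on the inner boundary $\partial(2B)\cap\Omega$.
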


For the proof, see Lemma 4.11 from \cite{Jerison-Kenig}. See also \cite{MT} for an extension of this result to the so-called uniform domains.

Given two NTA domains $\Omega^+\subset \R^{n+1}$ and $\Omega^-=\R^{n+1}\setminus \overline{\Omega^+}$, 
we say that $\Omega^+$ and $\Omega^-$ have  joint big pieces of chord-arc subdomains if for any ball $B$ centered in $\partial\Omega^+$ with radius at most $\diam (\partial\Omega^+)$ there are two $C$-chord-arc domains
$\Omega_{B}^s\subset \Omega^s$, with $s=+,-$, such that $\HH^n(\partial\Omega_B^+\cap 
\partial\Omega_B^-\cap B)\gtrsim r(B)^n$, uniformly on $B$.

Given a set $E\subset\R^{n+1}$, $x\in \mathbb{R}^{n+1}$, $r>0$, and $P$ an $n$-plane, we set
\begin{equation}\label{eqDE}
D_{E}(x,r,P)=r^{-1}\max\left\{\sup_{y\in E\cap B(x,r)}\dist(y,P), \sup_{y\in P\cap B(x,r)}\dist(y,E)\right\}.
\end{equation}
We also define
\begin{equation}\label{eqDE2}
D_{E}(x,r)=\inf_{P}D_{E}(x,r,P),
\end{equation}
where the infimum is over all $n$-planes $P$. For a given ball $B=B(x,r)$, we will also write
$D_{E}(B)$ instead of $D_{E}(x,r)$.
Given $\delta, R>0$, the set $E$ is {\em $(\delta,R)$-Reifenberg flat} (or just $\delta$-Reifenberg flat) if $D_{E}(x,r)<\delta$ for all $x\in E$ and $0<r\leq R$, and it is {\it vanishing Reifenberg flat} if 
\[
\lim_{r\rightarrow 0} \sup_{x\in E} D_{E}(x,r)=0.\]

Let $\Omega\subset \R^{n+1}$ be an open set, and let $0<\delta<1/2$. We say that $\Omega$
is a $(\delta,R)$-Reifenberg flat domain (or just $\delta$-Reifenberg flat) if it satisfies the following conditions:
\begin{itemize}
\item[(a)] $\partial\Omega$ is $(\delta,R)$-Reifenberg flat.

\item[(b)] For every $x\in\partial \Omega$ and $0<r\leq R$, denote by $P(x,r)$ an $n$-plane that minimizes $D_{E}(x,r)$. Then one of the connected components of 
$$B(x,r)\cap \bigl\{x\in\R^{n+1}:\dist(x,P(x,r))\geq 2\delta\,r\bigr\}$$
is contained in $\Omega$ and the other is contained in $\R^{n+1}\setminus\Omega$.
\end{itemize}
If, additionally,  $\partial\Omega$ is vanishing Reifenberg flat, then $\Omega$ is said to be vanishing Reifenberg flat, too.
It is well known that if $\Omega$ is a $\delta$-Reifenberg flat domain, with $\delta$ small enough,
then it is also an NTA domain (see \cite{Kenig-Toro-duke}).
 \vv


\subsection{The space $\vmo$}

Given a Radon measure $\mu$ in $\R^{n+1}$, $f\in L^1_{loc}(\mu)$, and $A\subset \R^{n+1}$, we write
$$m_{\mu,A}(f) = \avint_A f\,d\mu=
\frac1{\mu(A)}\int_A f\,d\mu.$$
Assume $\mu$ to be doubling.
We say that $f\in \vmo(\mu)$ if
\begin{equation}\label{defvmo}
\lim_{r\rightarrow 0}\sup_{x\in \supp \mu}\,\, \avint_{B(x,r)} \left| f- m_{\mu,B(x,r)}(f)\right|\,d\mu =0.
\end{equation}
It is well known  that the space $\vmo(\mu)$  coincides with the closure of the set of bounded uniformly continuous functions on $\supp \mu$ in the BMO norm. 


\vv

\subsection{Dyadic lattices and densities}
 In \cite{Christ} Christ introduced  dyadic lattices for doubling metric spaces. We state below a version of this result from Hyt\"onen and Kairema \cite{HK}, in the
 particular setting of $\partial\Omega^+$, which has the advantage that $\partial\Omega^+$ is covered by the family of cubes of any given generation and avoids the problem of the possible existence of uncovered sets of zero measure, as in \cite{Christ}. The construction of David and Mattila in \cite{DM}, valid for non-doubling measures in $\R^{n+1}$, would also work in our context.

\begin{theorem} \label{teo-christ}
Let $\Omega^+$ be as in Theorem \ref{teo2}.
There exist a family $\DD$ of Borel subsets of $\partial\Omega^+$  and constants $0<r_0<1$, $0<a_1, C_1<\infty$ such that $\mathcal{D}=\bigcup_{k\in\Z} \mathcal{D}_k$ with $\mathcal{D}_k=\{Q^{i}\}_{i\in I_k}$, and the following holds:
\begin{enumerate}
\item[(a)] For every $k\in\Z$ we have $\partial\Omega^+ = \bigcup_{Q\in\mathcal{D}_k} Q$.
\item[(b)] For every $k_0\leq k_1$ and $Q_j \in \mathcal{D}_{k_j}$ for $j\in \{0,1\}$, then either $Q_1\subset Q_0$ or $Q_1\cap Q_0=\varnothing$. 
\item[(c)] For each $Q_1\in \mathcal{D}_{k_1}$ and each $k_0<k_1$ there exists a unique cube $Q_0\in \mathcal{D}_{k_0}$ such that $Q_1\subset Q_0$.
\item[(d)] For $Q\in\mathcal{D}_k$ there are $z_Q\in Q$ and balls $\wt B_Q=B(z_Q,a_1r_0^k)$ and $B_Q= B(z_Q,C_1r_0^k)$ such that $\wt B_Q\cap\partial \Omega^+ \subset Q\subset B_Q$.
\end{enumerate}
\end{theorem}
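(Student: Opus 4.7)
The plan is to reduce the statement to a direct invocation of the Hyt\"onen--Kairema dyadic lattice construction of \cite{HK}, applied to the metric space $(\partial\Omega^+, |\cdot|)$. Since the assertion (a)-(d) involves no underlying measure, only the purely metric version of that construction is needed. The advantage of using \cite{HK} in place of Christ's original version \cite{Christ} is precisely that it gives an honest partition property (a) at every generation, thereby avoiding the exceptional sets of measure zero that arise in \cite{Christ}; this matches exactly the wording of the theorem here.

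The only nontrivial hypothesis to verify is that $(\partial\Omega^+, |\cdot|)$ is geometrically doubling, i.e.\ that every ball of radius $2r$ in $\partial\Omega^+$ can be covered by at most $N$ balls of radius $r$ for some absolute $N$. This is automatic: the ambient space $\R^{n+1}$ is geometrically doubling with $N=N(n)$, and geometric doubling is hereditary, so the induced metric space $\partial\Omega^+$ inherits the property with the same constant. In particular, neither the Reifenberg flatness nor the NTA condition on $\Omega^+$ enters the proof of this lemma; those assumptions will be used later, to attach further geometric information to the cubes (such as an approximating $n$-plane at each scale).

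Applying \cite{HK} with any $r_0\in(0,1)$ small enough relative to $N(n)$ then directly yields a family $\DD=\bigcup_{k\in\Z}\DD_k=\bigcup_{k\in\Z}\{Q^i\}_{i\in I_k}$ satisfying the partition property (a), the coherent nesting (b), and the unique-parent property (c), together with concentric inscribed and circumscribed balls $\wt B_Q=B(z_Q,a_1r_0^k)$ and $B_Q=B(z_Q,C_1r_0^k)$ with $\wt B_Q\cap\partial\Omega^+\subset Q\subset B_Q$ as in (d), for constants $a_1,C_1$ depending only on $n$ and $r_0$. Since $\Omega^+$ is bounded, $\diam(\partial\Omega^+)<\infty$, and for $k$ less than some threshold $k_0$ one sets $\DD_k=\{\partial\Omega^+\}$ with an arbitrary fixed center $z_{\partial\Omega^+}\in\partial\Omega^+$; after a harmless enlargement of $C_1$ this preserves (a)-(d) for every $k\in\Z$. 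I do not anticipate any substantive obstacle: the entire content of the lemma is a citation of \cite{HK} for the geometrically doubling metric space $\partial\Omega^+$, as the authors themselves remark in the statement.
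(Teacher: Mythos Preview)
Your proposal is correct and matches the paper's approach exactly: the paper does not prove this theorem but simply cites \cite{HK} (noting also that \cite{Christ} or \cite{DM} would serve), and your write-up supplies the one check needed---that $\partial\Omega^+$ inherits geometric doubling from $\R^{n+1}$---together with the harmless bookkeeping for small $k$ in the bounded case.
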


We say that $Q\in\mathcal{D}_k$ is a \emph{dyadic cube} of generation $k$, and write $\ell(Q):=2C_1r_0^k$. We call $\ell(Q)$  the side length of $Q$. The point $z_Q$ in (d) is called the center of $Q$. For $R\in\DD$, we denote by $\DD(R)$ the family of the cubes $Q\in\DD$ which are contained in $R$.


We say that two cubes $Q,S\in \DD_k$ (of the same generation) are {\em neighbors}, writing $S\in\mathcal{N}(Q)$, if $2  B_Q\cap 2 B_S\neq \varnothing$ (notice that a cube from $\DD_k$ is neighbor from itself).
We write $\mathcal{ND}_k:=\left\{\bigcup_{S\in\mathcal{N}(Q)}S\right\}_{Q\in\mathcal{D}_k}$ and $\mathcal{ND}:=\bigcup_{k} \mathcal{ND}_k$. We say that $P\in\mathcal{ND}_k$ is an extended cube of generation $k$, and write $\ell(P):=2C_1r_0^k$.
Notice that $P$ is a smeared cube formed by the union of the neighbors of a given cube from $\DD$.
We will apply many of our arguments on cubes to both the dyadic and the extended cubes.
We write $\wh\DD_k=\DD_k\cup\mathcal{ND}_k$ and $\wh\DD=\DD\cup\mathcal{ND}$. We refer as {\em cubes} to both the dyadic and the extended cubes.

We need also to introduce ``dilations'' of cubes. Given $Q\in \wh\DD_k$ and $\Lambda >1$, we write
$$\Lambda Q = \{x\in \partial\Omega^+: \dist(x,Q) < (\Lambda-1)\ell(Q)\}.$$
and $\ell( \Lambda Q ):=2C_1(\Lambda r_0)^k$. Obviously, we also define $1Q\equiv Q$.

Let $\mu$ be a Radon measure in $\R^{n+1}$. 
Given a ball $B\subset\R^{n+1}$, we denote
\begin{equation}\label{sec2.6}
\Theta_\mu(B) = \frac{\mu(B)}{r(B)^n}
.
\end{equation}
So $\Theta_\mu(B)$ is the $n$-dimensional density of $\mu$ on $B$.
\vv


\section{Proof of Theorem \ref{teo2}}\label{sec3**}

\subsection{Proof of (d) $\Rightarrow$ (a)}\label{secac}

The fact that $\Omega^+$ and $\Omega^-$ are chord-arc domains ensures that
 $\omega^\pm\in A_\infty(\sigma)$,  for $\sigma=\HH^n|_{\partial\Omega^+}$, by well known results of David and Jerison \cite{DJ} or Semmes \cite{Semmes}. Also, since $N\in\vmo(\sigma)$, from (iii) $\Rightarrow$ (i) in Theorem B, we infer that
$\log\dfrac{d\omega^\pm}{d\sigma} \in \vmo(\sigma)$. 
For any ball $B$ centered in $\partial\Omega^+$, 
writing $$\log\dfrac{d\omega^-}{d\omega^+} = \log\dfrac{d\omega^-}{d\sigma}-
\log\dfrac{d\omega^+}{d\sigma},$$
and denoting $m_B^\pm = \avint_B \log\frac{d\omega^\pm}{d\sigma}\,d\sigma$,
we get
$$\avint_B \Big|\log\dfrac{d\omega^-}{d\omega^+} -  (m_B^- - m_B^+)\Big|\,d\omega^+ \leq 
\avint_B \Big|\log\dfrac{d\omega^-}{d\sigma} -  m_B^-\Big|\,d\omega^+ + 
\avint_B \Big|\log\dfrac{d\omega^+}{d\sigma} -  m_B^+\Big|\,d\omega^+.
$$
Since $\omega^+\in A_\infty(\sigma)$, it follows that $\frac{d\omega^+}{d\sigma}\in B_{p}(\sigma)$ for some $p\in(1,\infty)$ (i.e., it satisfies a reverse H\"older inequality with exponent $p$).
Then we write
\begin{align*}
\avint_B \Big|\log\dfrac{d\omega^\pm}{d\sigma} -  m_B^\pm\Big|\,d\omega^+ & = \frac{\sigma(B)}{\omega^+(B)}\,
\avint_B \Big|\log\dfrac{d\omega^\pm}{d\sigma} -  m_B^\pm\Big|\,\frac{d\omega^+}{d\sigma}\,d\sigma \\
& \leq \frac{\sigma(B)}{\omega^+(B)}\,
\left(\avint_B \Big|\log\dfrac{d\omega^\pm}{d\sigma} -  m_B^\pm\Big|^{p'}\,d\sigma\right)^{1/p'}
\left(\avint_B \Big(\frac{d\omega^+}{d\sigma}\Big)^p\,d\sigma\right)^{1/p}\\
& \lesssim \left(\avint_B \Big|\log\dfrac{d\omega^\pm}{d\sigma} -  m_B^\pm\Big|^{p'}\,d\sigma\right)^{1/p'}.
\end{align*}
By the John-Nirenberg inequality and the fact that $\log\dfrac{d\omega^\pm}{d\sigma} \in \vmo(\sigma)$, it follows that the right hand
side above tends to $0$ as $r(B)\to 0$, uniformly with respect to the center of the ball.
Therefore,
$$\avint_B \Big|\log\dfrac{d\omega^-}{d\omega^+} -  \avint_B\log\dfrac{d\omega^-}{d\omega^+}\,d\omega^+\Big|\,d\omega^+\lesssim
\avint_B \Big|\log\dfrac{d\omega^-}{d\omega^+} -  (m_B^- - m_B^+)\Big|\,d\omega^+\to0 
\quad\mbox{ ar $r(B)\to 0$,}
$$
uniformly with respect to the center of the ball.
So $\log\dfrac{d\omega^-}{d\omega^+} \in \vmo(\omega^+),$ as wished.

\vv

\subsection{Stopping cubes for the proof of (c) $\Rightarrow$ (d)}\label{secstop}

The rest of the paper is devoted to the proof of this implication. To prove (d), we assume that $\Omega^+$ is Reifenberg flat, that
 $\Omega^+$ and  $\Omega^-$ have joint big pieces of chord-arc subdomains, and that
\rf{eqnb*} holds.

We claim that
it suffices to prove that $\partial\Omega^+$ is $n$-AD-regular. Indeed, 
then $\Omega^+$ is a chord-arc domain and thus
$\omega^+\in A_\infty(\sigma)$. Let us see that together with \rf{eqnb*} 
this implies that $N\in \vmo(\sigma)$. For any ball $B$ centered in $\partial\Omega^+$ and any $p>1$, we have
\begin{align*}
\avint_B |N - N_B|\,d\sigma & = \frac{\omega^+(B)}{\sigma(B)}\,\avint_B |N - N_B|\,\frac{d\sigma}{d\omega^+}\,d\omega^+\\
& \leq \frac{\omega^+(B)}{\sigma(B)}\,\left(\avint_B |N - N_B|^p\,d\omega^+\right)^{1/p}\,\left(\avint_B \left(\frac{d\sigma}{d\omega^+}\right)^{p'}\,d\omega^+\right)^{1/p'}
\end{align*}
By the $A_\infty$ condition, there exists some $p'$ small enough (or $p$ large enough) such that the following reverse H\"older inequality holds:
$$\left(\avint_B \left(\frac{d\sigma}{d\omega^+}\right)^{p'}\,d\omega^+\right)^{1/p'}\lesssim \frac{\sigma(B)}{\omega^+(B)}.$$
Thus, using also  that $|N - N_B|\leq2$,
$$\avint_B |N - N_B|\,d\sigma \lesssim \left(\avint_B |N - N_B|^p\,d\omega^+\right)^{1/p} \lesssim 
\left(\avint_B |N - N_B|\,d\omega^+\right)^{1/p}.
$$
By \rf{eqnb*}, the right hand side tends to $0$ uniformly as $r(B)\to0$, and thus $N\in \vmo(\sigma)$.

To prove the AD-regularity of $\partial\Omega^+$, 
we have to show that, for any $x\in\partial \Omega^+$ and any $r>0$,
$$\HH^n(\partial \Omega^+\cap B(x,r))\leq C\,r^n.$$
We remark that the converse estimate (i.e., the lower $n$-AD-regularity of $\partial\Omega^+$) is an easy consequence of the two-sided corkscrew condition for $\Omega^+$. 
We consider the lattice $\DD$ from Theorem \ref{teo-christ}.
Then, clearly, the preceding condition is equivalent to 
$$\HH^n(R_0)\leq C\,\ell(R_0)^n\quad \mbox{ for all $R_0\in\DD$ small enough.}$$

We consider a fixed cube $R_0\in\DD$. For a small constant $\ve_0\in(0,1)$ to be chosen below, 
due to the Reifenberg flatness assumption (depending on $\ve_0$), 
we can ensure that if $\ell(R_0)$ is small enough, then 
\begin{equation}\label{eqreif0}
D_{\partial\Omega^+}(x,r)\leq \ve_0\quad \mbox{ for all $x\in\partial\Omega^+$ and $0<r\leq 100\ell(R_0)$.}
\end{equation}

Given some parameters $\delta,\alpha\in(0,1)$ and $M\gg1$ to be chosen below, we
consider some stopping cubes defined as follows: we say that $Q\in \Stop(R_0)$ if $Q\in\DD$ is a maximal cube contained in $R_0$ such that one of the following options holds:
\begin{itemize}
\item $\Theta_{\omega^+}(B_Q)> M\Theta_{\omega^+}(B_{R_0})$. We write $Q\in\HHD^+(R_0)$.
\item $\Theta_{\omega^+}(B_Q)\leq \delta\,\Theta_{\omega^+}(B_{R_0})$.  We write $Q\in\LLD^+(R_0)$.
\item $|N_{B_{R_0}}- N_{B_Q}|\geq \alpha$. We write $Q\in\BBS(R_0)$.
\end{itemize}
Recall that the vectors $N_{B_{R_0}}$ and $N_{B_Q}$ are defined in \rf{eqnb*2}. The notations $\HHD$, $\LLD$ and $\BBS$ stand for high density, low density, and big slope, respectively.

We denote
$$\HD^+(R_0) =\bigcup_{Q\in\HHD^+(R_0)}Q,\qquad \LD^+(R_0) =\bigcup_{Q\in\LLD^+(R_0)}Q, \qquad \BS(R_0) =\bigcup_{Q\in\BBS(R_0)}Q$$
and
$$G(R_0)= R_0 \setminus \bigcup_{Q\in\Stop(R_0)} Q.$$
We let $\Tree(R_0)$ be the subfamily of $\DD(R_0)$ such that no cube from $\Tree(R_0)$ is properly contained in any cube from $\Stop(R_0)$.

From now on, we also write $N_Q\equiv N_{B_Q}$ and $L_Q=L_{B_Q}$, with $L_{B_Q}$ defined in \rf{eqnb*2}.

\vv
\begin{lemma}\label{lem4.0}
For any $\ve>0$, if $\ell(R_0)$ is small enough, $M$ big enough, and $\delta$ small enough, then
$$\omega^+(\HD^+(R_0) \cup \LD^+(R_0))\leq \ve\,\omega^+(R_0).$$
\end{lemma}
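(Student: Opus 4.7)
My plan is to bound $\omega^+(\LD^+(R_0))$ and $\omega^+(\HD^+(R_0))$ separately, killing the LD contribution by taking $\delta$ small and the HD contribution by taking $M$ large; the two cases call for qualitatively different arguments.

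For the LD part I would work directly from the defining density bound. Given $Q\in\LLD^+(R_0)$ one has $\omega^+(Q)\le\omega^+(B_Q)\lesssim\delta\,\Theta_{R_0}\,\ell(Q)^n$, so by disjointness
\[
\omega^+(\LD^+(R_0))\;\lesssim\;\delta\,\Theta_{R_0}\sum_{Q\in\LLD^+(R_0)}\ell(Q)^n.
\]
The key packing estimate $\sum_{Q\in\LLD^+(R_0)}\ell(Q)^n\lesssim\ell(R_0)^n$ I would extract from the Reifenberg flatness~\rf{eqreif0}: projecting each inner piece $\tilde B_Q\cap\partial\Omega^+\subset Q$ orthogonally onto $L_{R_0}$ yields almost-$n$-disks of area $\asymp\ell(Q)^n$ (since $L_Q$ makes angle $O(\ve_0)$ with $L_{R_0}$), these projections are quasi-disjoint because $\partial\Omega^+\cap 2B_{R_0}$ is a near-graph over $L_{R_0}$, and they all lie inside a disk of radius $\asymp\ell(R_0)$. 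Together with $\omega^+(R_0)\asymp\Theta_{R_0}\ell(R_0)^n$ this yields $\omega^+(\LD^+(R_0))\lesssim\delta\,\omega^+(R_0)$, which is smaller than $\tfrac{\ve}{2}\omega^+(R_0)$ once $\delta$ is chosen small.

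For the HD case the purely combinatorial approach is not enough: matching the lower bound $\omega^+(Q)\gtrsim M\Theta_{R_0}\ell(Q)^n$ against the parent-doubling upper bound $\omega^+(Q)\lesssim M\Theta_{R_0}\ell(Q)^n$ only recovers $\omega^+(\HD^+(R_0))\lesssim\omega^+(R_0)$, i.e.\ the trivial estimate. Instead I would use hypothesis~(c) to prove a local upper $n$-AD-regularity for $\omega^+$ inside $B_{R_0}$:
\[
\omega^+(B)\le K\,\Theta_{R_0}\,r(B)^n\quad\text{for every ball }B\subset 2B_{R_0}\text{ centered on }\partial\Omega^+,
\]
with a constant $K$ depending only on the NTA, chord-arc and big-pieces parameters. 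For each such $B$, the joint big pieces condition supplies a chord-arc subdomain $\Omega_B^+\subset\Omega^+$ with $\HH^n(\partial\Omega_B^+\cap\partial\Omega_B^-\cap B)\gtrsim r(B)^n$; the David--Jerison/Semmes $A_\infty$-equivalence of $\omega^+_{\Omega_B^+}$ with $\HH^n|_{\partial\Omega_B^+}$, the local comparability of $\omega^+_{\Omega^+}$ and $\omega^+_{\Omega_B^+}$ on the joint big piece, and a Harnack chain argument to align the normalizations across $B_{R_0}$ combine to yield the bound. Picking $M>K$ then forces $\HHD^+(R_0)=\varnothing$ and hence $\omega^+(\HD^+(R_0))=0$.

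The hardest point is the local $n$-AD-regular upper bound itself, precisely because no a priori finiteness of $\HH^n|_{\partial\Omega^+}$ is available: every estimate must be routed through the chord-arc subdomains furnished by~(c), and the Harnack chain transport across $B_{R_0}$ must be set up so that the final constant $K$ depends only on the structural parameters of $\Omega^\pm$ rather than on any quantity involving the surface measure of the global boundary. Once this is in hand, the remainder of the proof is routine.
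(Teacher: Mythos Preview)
Your $\HD^+$ argument has a genuine gap. The inclusion $\Omega_B^+\subset\Omega^+$ together with the maximum principle gives only the one-sided bound $\omega_{\Omega^+}^{p}\ge\omega_{\Omega_B^+}^{p}$ on the common boundary piece $F$ --- a \emph{lower} bound on $\omega^+|_F$, not the upper bound $\omega^+(B)\le K\,\Theta_{\omega^+}(B_{R_0})\,r(B)^n$ that you claim. Boundary Harnack does upgrade this to two-sided comparability on $F$, but only up to a multiplicative constant that is itself of order $\omega^+(B)/r(B)^n$, so nothing is gained; and your ``Harnack chain to align normalizations across $B_{R_0}$'' cannot work because the auxiliary subdomains $\Omega_B^+$ vary with $B$, so there is no single harmonic function to transport. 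Even inside $\Omega_B^+$, David--Jerison yields only $A_\infty$, hence decay $\omega_{\Omega_B^+}^{p_B}(B(y,s))\lesssim(s/r(B))^{\theta n}$ with some $\theta<1$, not $n$-polynomial growth. In fact your target bound is probably \emph{false} in general: even once the main theorem is established one only obtains $\log\tfrac{d\omega^+}{d\sigma}\in\vmo(\sigma)$, which permits $\Theta_{\omega^+}(B(x,r))$ to grow like a small power of $r^{-1}$, so $\HHD^+(R_0)$ need not be empty for any fixed $M$. The lemma asserts only that its $\omega^+$-measure is small, and the actual argument in \cite{Prats-Tolsa} (to which the paper defers) must exploit this weaker target rather than forcing $\HHD^+(R_0)=\varnothing$.

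For $\LD^+$ your outline is correct, but the justification of the packing is off: Reifenberg flatness alone gives only $\angle(L_Q,L_{R_0})\lesssim\ve_0\log\bigl(\ell(R_0)/\ell(Q)\bigr)$, which is useless for deep cubes and certainly does not make $\partial\Omega^+\cap 2B_{R_0}$ a near-graph over $L_{R_0}$. The uniform angle control you need comes instead from the $\BBS$ stopping (the parent of every stopping cube satisfies $|N_{\hat Q}-N_{R_0}|<\alpha$); with that input the packing $\sum_{Q\in\Stop(R_0)}\ell(Q)^n\lesssim\ell(R_0)^n$ does follow, for instance via the Lipschitz graph of Section~\ref{sec444} exactly as in \rf{eqst94}, and that construction uses only the tree structure and not the present lemma, so there is no circularity.
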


The proof of this lemma is the same as the one of Lemma 4.1 from \cite{Prats-Tolsa} (remark that the vanishing Reifenberg flatness condition is not necessary in the proof, instead this relies on the condition about joint big pieces of chord-arc subdomains).
We also have:

\vv
\begin{lemma}\label{lem4.1*}
For any $\ve>0$, if $\ell(R_0)$ is small enough (depending on $\ve$ and $\alpha$), then
$$\omega^+(\BS(R_0) )\leq \ve\,\omega^+(R_0).$$
\end{lemma}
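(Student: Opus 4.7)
The plan is to transfer the scale-wise flatness condition $|N_{B_{R_0}}-N_{B_Q}|\ge\alpha$ defining $\BBS(R_0)$ to a pointwise condition on the $\omega^+$-a.e.\ defined inner unit normal $N(x)$, which is exactly what hypothesis \rf{eqnb*} controls. For any $x\in Q$ with $Q\in\BBS(R_0)$, the triangle inequality
\[\alpha\le|N_{B_{R_0}}-N_{B_Q}|\le|N_{B_{R_0}}-N(x)|+|N(x)-N_{B_Q}|\]
forces at least one of the two right-hand summands to exceed $\alpha/2$. I will therefore split $Q=E_Q\cup F_Q$ where
\[E_Q:=\{x\in Q:|N(x)-N_{B_Q}|\ge\alpha/2\},\qquad F_Q:=Q\setminus E_Q\subseteq\{x\in R_0:|N(x)-N_{B_{R_0}}|\ge\alpha/2\},\]
and estimate $\omega^+(\BS(R_0))\le\sum_Q\omega^+(E_Q)+\omega^+\bigl(\bigcup_Q F_Q\bigr)$ separately.

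For the first (``local'') piece, Chebyshev gives
\[\omega^+(E_Q)\le\frac{2}{\alpha}\int_{B_Q}|N-N_{B_Q}|\,d\omega^+=\frac{2}{\alpha}\,\omega^+(B_Q)\avint_{B_Q}|N-N_{B_Q}|\,d\omega^+\le\frac{2\eta}{\alpha}\,\omega^+(B_Q),\]
where $\eta:=\sup_{r(B)\le\ell(R_0)}\avint_B|N-N_B|\,d\omega^+$ is as small as desired once $\ell(R_0)$ is small, by \rf{eqnb*}. Since the cubes of $\BBS(R_0)\subset\Stop(R_0)$ are pairwise disjoint and $\omega^+(B_Q)\lesssim\omega^+(\wt B_Q)\le\omega^+(Q)$ by the doubling of harmonic measure on NTA domains together with the inclusion $\wt B_Q\cap\partial\Omega^+\subset Q$, summation yields
\[\sum_{Q\in\BBS(R_0)}\omega^+(E_Q)\lesssim\frac{\eta}{\alpha}\sum_{Q\in\BBS(R_0)}\omega^+(Q)\le\frac{\eta}{\alpha}\,\omega^+(R_0).\]

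For the second (``global'') piece, a single Chebyshev application on $B_{R_0}$ combined with \rf{eqnb*} gives
\[\omega^+\Bigl(\bigcup_{Q\in\BBS(R_0)}F_Q\Bigr)\le\frac{2}{\alpha}\int_{B_{R_0}}|N-N_{B_{R_0}}|\,d\omega^+\lesssim\frac{\eta}{\alpha}\,\omega^+(B_{R_0})\lesssim\frac{\eta}{\alpha}\,\omega^+(R_0),\]
where the last bound uses doubling and the inclusion $\wt B_{R_0}\cap\partial\Omega^+\subset R_0$. Adding the two contributions gives $\omega^+(\BS(R_0))\lesssim\eta\alpha^{-1}\,\omega^+(R_0)$, and choosing $\ell(R_0)$ small enough so that $\eta\le c\,\ve\alpha$ (with $c$ the implicit constant) yields the claimed bound. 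No serious obstacle is expected; the only step requiring some care is the overlap control for the balls $\{B_Q\}_{Q\in\BBS(R_0)}$, which is handled purely by the disjointness of the dyadic cubes combined with the doubling of $\omega^+$, and the $\omega^+$-a.e.\ existence of the inner normal $N$, which is guaranteed by the hypotheses (c) together with the Reifenberg flatness of $\Omega^+$.
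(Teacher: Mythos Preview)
Your proof is correct and follows essentially the same approach as the paper: both arguments insert the pointwise normal $N(x)$ into the gap $|N_{B_{R_0}}-N_{B_Q}|\ge\alpha$ via the triangle inequality and then control the two resulting oscillations $|N-N_{B_{R_0}}|$ and $|N-N_{B_Q}|$ using \rf{eqnb*}. The only difference is organizational: the paper integrates the triangle inequality directly over each $Q$ to obtain $\sum_Q|N_Q-N_{R_0}|\,\omega^+(Q)\le\sum_Q\int_Q|N-N_{R_0}|\,d\omega^++\sum_Q\int_Q|N-N_Q|\,d\omega^+$ and then divides by $\alpha$, whereas you first split $Q=E_Q\cup F_Q$ pointwise and apply Chebyshev to each piece --- these are equivalent repackagings of the same estimate.
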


\begin{proof}
By \rf{eqnb*}, assuming $\ell(R_0)$ small enough,
$$
\avint_{R_0} |N - N_{R_0}|\,d\omega^+ \leq \ve^2,$$
and, analogously,
$$
\avint_{Q} |N - N_{Q}|\,d\omega^+ \leq \ve^2\qquad \mbox{ for all $Q\in\BBS(R_0)$.}$$
Therefore,
\begin{align*}
\sum_{Q\in\BBS(R_0)} |N_Q - N_{R_0}|\,\omega^+(Q) & \leq \sum_{Q\in\BBS(R_0)} \int_Q |N - N_{R_0}|\,d\omega^+ + \sum_{Q\in\BBS(R_0)} \int_Q |N - N_Q|\,d\omega^+ \\
& \leq \ve^2\,\omega^+(R_0) + \sum_{Q\in\BBS(R_0)} \ve^2\,\omega^+(Q) \leq 2\ve^2\,\omega^+(R_0).
\end{align*}
Hence,
$$\omega^+(\BS(R_0) )\lesssim\sum_{Q\in\BBS(R_0)}  \frac{|N_Q - N_{R_0}|}\alpha \,\omega^+(Q) \lesssim
\frac{\ve^2}\alpha\,\omega^+(R_0),$$
and so the lemma follows, since we can assume that $\ve\leq c\alpha$.
\end{proof}

From the last two lemmas we infer that, for $\ell(R_0)$ small enough,
\begin{equation}\label{eqstop0}
\omega^+\Big(\bigcup_{Q\in\Stop(R_0)} Q\Big)\leq 2\ve\,\omega^+(R_0).
\end{equation}
Our next objective is to prove the following related estimates in the lemma below.

\begin{lemma}\label{keylemma}
Suppose that $\ell(R_0)$ is small enough and $\ve_0$, $\ve$, $\delta$, $\alpha$, and $M$ are chosen suitably. Then,
\begin{equation}\label{eqgr0}
\HH^n(G(R_0))\lesssim \ell(R_0)^n
\end{equation}
and
\begin{equation}\label{eqstopr0}
\sum_{Q\in\Stop(R_0)} \ell(Q)^n\leq \frac1{10}\,\ell(R_0)^n.
\end{equation}
\end{lemma}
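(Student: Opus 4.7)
The plan splits $R_0$ into the good set $G(R_0)$ and the family $\Stop(R_0)$, estimating each piece separately. The estimate \rf{eqgr0} would follow from a uniform lower $\omega^+$-density on $G(R_0)$: for every $x\in G(R_0)$ and every $0<r\leq \ell(R_0)$, the dyadic ancestor of $x$ at scale $\approx r$ lies in $\Tree(R_0)\setminus\Stop(R_0)$, so in particular is not in $\LLD^+(R_0)$, hence $\Theta_{\omega^+}(B_{Q(x,r)})>\delta\,\Theta_{\omega^+}(B_{R_0})$. Doubling of $\omega^+$ then yields $\omega^+(B(x,r))\gtrsim \delta\,\Theta_{\omega^+}(B_{R_0})\,r^n$, and a standard density-comparison theorem gives $\HH^n(G(R_0))\lesssim \delta^{-1}\,\Theta_{\omega^+}(B_{R_0})^{-1}\,\omega^+(R_0)\approx \delta^{-1}\ell(R_0)^n$.

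For \rf{eqstopr0}, I would split $\Stop(R_0)=\HHD^+(R_0)\cup\LLD^+(R_0)\cup\BBS(R_0)$ and aim to bound each contribution by $\ell(R_0)^n/30$. For $\HHD^+$, the HD threshold combined with doubling gives $\omega^+(Q)\gtrsim M\,\Theta_{\omega^+}(B_{R_0})\,\ell(Q)^n$, so
\[
\sum_{Q\in\HHD^+(R_0)}\ell(Q)^n\lesssim M^{-1}\,\Theta_{\omega^+}(B_{R_0})^{-1}\omega^+(R_0)\approx M^{-1}\ell(R_0)^n,
\]
small by choosing $M$ large. For $Q\in\BBS(R_0)\setminus\LLD^+(R_0)$, the density lies in $[\delta\Theta_{\omega^+}(B_{R_0}),M\Theta_{\omega^+}(B_{R_0})]$, hence $\omega^+(Q)\approx \Theta_{\omega^+}(B_{R_0})\ell(Q)^n$; combined with the mass bound $\sum_{\BBS(R_0)}\omega^+(Q)\lesssim \alpha^{-1}\eta\,\omega^+(R_0)$ coming from the proof of Lemma \ref{lem4.1*} (with $\eta=\eta(\ell(R_0))\to 0$ by \rf{eqnb*}), this yields $\sum_{\BBS(R_0)\setminus\LLD^+(R_0)}\ell(Q)^n\lesssim \delta^{-1}\alpha^{-1}\eta\,\ell(R_0)^n$, which is small once $\ell(R_0)$ is taken small enough.

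The LLD contribution is the crux, and the main obstacle I anticipate. Low-density cubes have too little $\omega^+$-mass for a direct mass-based argument, so geometric input is needed. The plan is to project stopping cubes onto the best-approximating plane $L_{R_0}$: by Reifenberg flatness with small $\ve_0$, $\partial\Omega^+\cap CB_{R_0}$ lies within $\ve_0\ell(R_0)$ of $L_{R_0}$, and along $\Tree(R_0)\setminus\Stop(R_0)$ each cube has normal within $\alpha$ of $N_{R_0}$ and density comparable to $\Theta_{\omega^+}(B_{R_0})$, so $\partial\Omega^+$ is close to a Lipschitz graph over $L_{R_0}$ with slope $\lesssim\alpha+\ve_0$ in the non-stopping region. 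The joint big pieces of chord-arc subdomains, applied at each scale $B_Q$, provides a shared boundary piece $E_Q\subset B_Q$ with $\HH^n(E_Q)\gtrsim\ell(Q)^n$ on which $\omega^+_{\Omega^+_{B_Q}}\sim\HH^n$ by Kenig--Toro/David--Jerison; together with the change-of-pole formula for $\Omega^+$ and the maximum principle, this anchors each $Q$ geometrically relative to $L_{R_0}$. Projecting the stopping cubes onto $L_{R_0}$, each projection has area $\approx \ell(Q)^n$ with essentially bounded multiplicity (enforced by the small slope), giving $\sum_{\LLD^+(R_0)}\ell(Q)^n\lesssim \ell(R_0)^n$; choosing $\ve_0,\delta,\alpha$ sufficiently small pushes this below $\ell(R_0)^n/30$. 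The delicate point, and the main source of technical difficulty, is controlling overlaps between projections of stopping cubes at very different scales, which requires combining the small top-scale Reifenberg flatness with the uniform small-slope control along the tree structure.
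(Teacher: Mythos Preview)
Your argument for \rf{eqgr0} is correct, though different from the paper's: the paper builds a Lipschitz graph $\Gamma$ over $L_{R_0}$ (with slope $\lesssim\alpha$) containing $G(R_0)$, and reads off $\HH^n(G(R_0))\le\HH^n(\Gamma\cap B_{S_0})\lesssim\ell(R_0)^n$ with a constant independent of $\delta$. Your density argument also works, since $\delta$ is fixed and the implicit constant in \rf{eqgr0} need not be small.

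The gap is in \rf{eqstopr0}, specifically in the $\LLD^+$ part. Your projection argument can at best yield $\sum_{Q\in\Stop(R_0)}\ell(Q)^n\lesssim\ell(R_0)^n$ with an \emph{absolute} constant: bounded overlap of the projected balls onto $L_{R_0}$ is a dimensional fact and does not improve as $\ve_0,\delta,\alpha\to 0$. To get the small constant $1/10$ you would need $\HH^n\big(\Pi(B_{R_0})\setminus \Pi(G(R_0))\big)$ to be small, i.e.\ that $G(R_0)$ occupies most of the projection. But a priori you only know $\omega^+(G(R_0))\ge(1-2\ve)\,\omega^+(R_0)$, and converting large $\omega^+$-mass of $G(R_0)$ into large $\HH^n$-mass is precisely the issue, since you do not yet know $\omega^+$ and $\HH^n|_{\partial\Omega^+}$ are comparable (that is the goal of the whole argument). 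The sentence ``choosing $\ve_0,\delta,\alpha$ sufficiently small pushes this below $\ell(R_0)^n/30$'' is not justified, and the invocation of joint big pieces of chord-arc subdomains at the scale of each $Q$ does not help: it produces $\HH^n$-mass \emph{inside} each $B_Q$, not a smallness statement for the union.

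The paper resolves this by passing to an auxiliary \emph{Lipschitz} domain where Dahlberg's theorem is available. After building the approximating Lipschitz graph $\Gamma$, it constructs a Lipschitz domain $\wt U^+\supset\Omega^+\cap K_0$ whose boundary contains $G(S_0)$. By the maximum principle and the change-of-pole formula, the estimate $\omega^+(S_0\setminus G(S_0))\lesssim\ve\,\omega^+(S_0)$ transfers to $\omega_{\wt U^+}^{p_0}(K_0\cap\partial U^+\setminus G(S_0))\lesssim\ve$. Now Dahlberg's $A_\infty$ theorem on the Lipschitz domain $\wt U^+$ converts this into
\[
\HH^n\big(K_0\cap\partial U^+\setminus G(S_0)\big)\lesssim \ve^a\,\ell(R_0)^n,
\]
and then the projection argument (which you essentially had) gives $\sum_{Q\in\Stop(R_0)}\ell(Q)^n\lesssim\HH^n\big(K_0\cap\partial U^+\setminus G(S_0)\big)\lesssim\ve^a\,\ell(R_0)^n$, which is $\le\tfrac1{10}\ell(R_0)^n$ for $\ve$ small. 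The key missing idea in your proposal is this passage through a Lipschitz domain to invoke Dahlberg and turn the small-$\omega^+$ information into small-$\HH^n$ information.
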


\begin{rem}
Below we will choose the Reifenberg flatness parameter $\ve_0$ depending only on the ambient dimension. So $\ve_0$ is independent of the 
parameters $M$ and $\delta$, which in turn depend on the property of having joint big pieces of chord-arc subdomains. Similarly, $\alpha$ is a constant that will be fixed below depending  at most on $n$.
\end{rem}

Before proving this result, we will show how this yields Theorem \ref{teo2}.

\vv

\subsection{Proof of Theorem \ref{teo2} using Lemma \ref{keylemma}} \label{sec32}

We now construct the family of cubes $\ttt(R_0)$ contained in $R_0$ inductively. First we write $\ttt_0(R_0)=\{R_0\}$, and then
assuming $\ttt_{k-1}(R_0)$ to be defined, we set
$$\ttt_{k}(R_0)  = \bigcup_{R\in\ttt_{k-1}(R_0)} \Stop(R),$$
and we let $\ttt(R_0) = \bigcup_{k\geq0}\ttt_k(R_0)$.
We also set
$$G_k(R_0) = \bigcup_{R\in\ttt_{k-1}(R_0)} \Big(R\setminus \bigcup_{Q\in\Stop(R)} Q\Big) = \bigcup_{R\in\ttt_{k-1}(R_0)} R \setminus \bigcup_{Q\in\ttt_k(R_0)} Q.$$
So  we have the partition
$$
R_0 = \bigcup_{k=1}^\infty G_k(R_0) \cup \bigcap_{k=1}^\infty\bigcup_{Q\in\ttt_k(R_0)} Q,
$$
and thus
\begin{equation}\label{eqhn5}
\HH^n(R_0) \leq \sum_{k=1}^\infty \HH^n(G_k(R_0)) + \HH^n\Big(\bigcap_{k=1}^\infty\bigcup_{Q\in\ttt_k(R_0)} Q\Big).
\end{equation}

From Lemma \ref{keylemma} (applied to the cubes $Q$ below in place of $R_0$), we deduce that, for every $k$,
\begin{equation}\label{eqhn6}
\HH^n(G_k(R_0))\leq  \sum_{Q\in\ttt_{k-1}(R_0)} \HH^n(G(Q))\lesssim \sum_{Q\in\ttt_{k-1}(R_0)}\ell(Q)^n
\end{equation}
and that
$$\sum_{Q\in\ttt_{k}(R_0)}\ell(Q)^n = \sum_{R\in \ttt_{k-1}(R)}\sum_{Q\in\sss(R)}\ell(Q)^n
\leq \frac1{10}\sum_{R\in\ttt_{k-1}(R_0)}\ell(R)^n.$$
 Thus,
\begin{equation}\label{eqhn6a}
\sum_{Q\in\ttt_{k}(R_0)}\ell(Q)^n\leq 10^{-k}\ell(R_0)^n,
\end{equation}
which, together with \rf{eqhn6}, implies that
\begin{equation}\label{eqhn6b}
\HH^n(G_k(R_0))\lesssim 10^{-k}\ell(R_0)^n.
\end{equation}

On the other hand, taking into account that there exists some $r_0\in(0,1)$ such that every $Q\in\ttt_k(R_0)$ satisfies $\ell(Q)\leq r_0^{k}\,\ell(R_0)$, from the definition of $\HH^n$ we deduce that
\begin{equation}\label{eqhn6c}
\HH^n\Big(\bigcap_{k=1}^\infty\bigcup_{Q\in\ttt_k(R_0)} Q\Big)\lesssim \limsup_{k\to\infty} \sum_{Q\in\ttt_k(R_0)}\ell(Q)^n \leq \limsup_{k\to\infty}10^{-k}\ell(R_0)^n = 0.
\end{equation}
Hence, by \rf{eqhn5}, \rf{eqhn6b}, and. \rf{eqhn6c}, we obtain
$$\HH^n(R_0) \lesssim \sum_{k=1}^\infty 10^{-k}\ell(R_0)^n \lesssim \ell(R_0)^n.$$

\fiproof


\section{The proof of Lemma \ref{keylemma}}\label{sec444}

\subsection{Preliminaries for the  construction of the approximating Lipschitz graph}

For technical reasons we have to work with (extended) cubes from $\mathcal{ND}$. Given $R_0\in\DD$, we consider  the cube $S_0\in \mathcal{ND}$ defined by
$$S_0 = \bigcup_{R\in \mathcal N(R_0)} R,$$
We denote
$$\Stop(S_0) =  \bigcup_{R\in \mathcal N(R_0)} \Stop(R),\quad \Tree(S_0) =  \bigcup_{R\in \mathcal N(R_0)} \Tree(R),
\qquad G(S_0) =  \bigcup_{R\in \mathcal N(R_0)} G(R),$$
and we also set
$$z_{S_0} = z_{R_0},\qquad \ell(S_0) = \ell(R_0),\qquad L_{S_0} = L_{R_0},\qquad B_{S_0} = 5B_{R_0}.$$
Recall that $z_{R_0}$ stands for the center of $R_0$. So $z_{S_0}$ should also be considered as the center of $S_0$.



To prove Lemma \ref{keylemma} we need first to construct a Lipschitz graph that will approximate well $S_0$ at the scale of $\Stop(S_0)$ and will contain $G(S_0)$.
The arguments involved were first developed by David and Semmes \cite{DS1} and now have become standard and there are many presentations of this.  We will follow \cite{Tolsa-llibre}.
First we need to state some preliminary lemmas.

\begin{lemma} \label{l:dist-x-L}
Let $P, Q \in \Tree(S_0)\setminus\Stop(S_0)$ be so that $P \subset Q$. If $x \in L_P \cap B_P$, then
\begin{align*}
    \dist(x, L_Q) \lesssim \ve_0 \, \ell(Q).
\end{align*}
\end{lemma}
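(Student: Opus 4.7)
The plan is to use only the Reifenberg flatness hypothesis \eqref{eqreif0}; the stopping conditions play no direct role here beyond placing $P,Q$ inside the region where that flatness has been invoked. First I would observe the elementary inclusion $B_P \subset 2 B_Q$: since $P\subset Q$ forces $z_P\in Q\subset B_Q$ and $\mathrm{rad}(B_P)\leq \mathrm{rad}(B_Q)$, one has $B_P\subset B(z_Q,2\,\mathrm{rad}(B_Q))$, so in particular $x\in 2B_Q$. Next, from the (near-)minimality of $L_P$ in $B_P$ combined with $D_{\partial\Omega^+}(B_P)\lesssim\ve_0$, I can pick $y\in\partial\Omega^+$ with $|x-y|\lesssim\ve_0\,\ell(P)$; since $\ve_0\ell(P)\ll\ell(Q)$, this $y$ lies in $3 B_Q$.

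It then remains to show $\dist(y,L_Q)\lesssim\ve_0\,\ell(Q)$. I would apply Reifenberg flatness to the ball $3 B_Q$ (still of radius $\leq 100\ell(R_0)$ for $R_0$ small enough), obtaining a near-minimizing $n$-plane $L^*$ for $3B_Q$ satisfying $\sup_{z\in\partial\Omega^+\cap 3B_Q}\dist(z,L^*)\lesssim\ve_0\ell(Q)$ together with the symmetric inequality. In particular $\dist(y,L^*)\lesssim\ve_0\ell(Q)$. To compare $L^*$ with $L_Q$, I would pick $n+1$ affinely well-spread points $p_0,\dots,p_n\in L_Q\cap\tfrac12 B_Q$ at mutual distance $\gtrsim\ell(Q)$, approximate each by some $q_i\in\partial\Omega^+\cap B_Q$ within $\ve_0\ell(Q)$ (using the ``$L_Q\to\partial\Omega^+$'' side of Reifenberg flatness for $B_Q$), and note that each $q_i$ also lies within $\ve_0\ell(Q)$ of $L^*$. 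An elementary linear-algebra argument with these $n+1$ affinely independent points then forces the Hausdorff distance between $L_Q\cap 3B_Q$ and $L^*\cap 3B_Q$ to be $\lesssim\ve_0\ell(Q)$. Combining these bounds yields
$$\dist(x,L_Q)\leq|x-y|+\dist(y,L^*)+\mathrm{dist}_H\bigl(L^*\cap 3B_Q,\,L_Q\cap 3B_Q\bigr)\lesssim\ve_0\ell(P)+\ve_0\ell(Q)\lesssim\ve_0\ell(Q).$$

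The only mildly non-routine point is the comparison of $L^*$ with $L_Q$ in the last step, which is standard but requires some care with the affine spread of the chosen points. A cleaner alternative would be to iterate across the chain of dyadic parents $P=P_0\subsetneq P_1\subsetneq\cdots\subsetneq P_k=Q$: one establishes first the consecutive-generation version (where the two balls are already comparable in size, so the above argument trivializes) and sums the resulting geometric series $\sum_i\ve_0\ell(P_i)\lesssim\ve_0\ell(Q)$. Either route produces an implicit constant depending only on $n$ and on the structural constants $r_0,a_1,C_1$ of the dyadic lattice, and in particular independent of $M,\delta,\alpha$.
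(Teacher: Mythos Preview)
Your argument is correct. The paper does not actually prove this lemma; it simply declares the result standard and points to Lemma~7.14 in \cite{Tolsa-llibre}, so there is nothing to compare against in the paper itself beyond the fact that both you and the reference rely only on the Reifenberg flatness bound \eqref{eqreif0} and not on the stopping conditions. Your direct argument (pass from $x\in L_P$ to a nearby boundary point $y$, compare $L_Q$ to a minimizing plane $L^*$ for $3B_Q$ via $n+1$ well-spread points, and chain the three distances) is a legitimate one-shot proof; the iteration across the chain of dyadic parents that you mention as an alternative is the presentation more commonly found in the David--Semmes literature and is presumably what the cited reference does, but the two are equivalent and yield the same dependence of the implicit constant on $n$ and the lattice parameters only.
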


The proof of this result is standard. See Lemma 7.14 in \cite{Tolsa-llibre} for a similar estimate. Recall that the constant
$\ve_0$ appears in \rf{eqreif0}, and the parameter $\delta$ is the one in the definition of the family $\LLD^+$.
Other relevant parameters are $M$ and $\alpha$, in the definition of the families $\HHD^+$ and $\BBS$, respectively.

We denote by $\Pi$ (respectively $\Pi^\perp$) the orthogonal projection onto $L_{S_0}$ (respectively, $L_{S_0}^\perp$). 
 Further, we assume that $L_{S_0}=\R^n\times\{0\}$, so that $L_{S_0}^\perp$ is the vertical axis of $\R^{n+1}$, which we
will identify with $\R$.
For any given $n$-plane $L$, we  denote by $\Pi_L$ the orthogonal projection on $L$.  

The next lemma is analogous to Lemma 7.15 in \cite{Tolsa-llibre}.

\begin{lemma}
Let $Q, P \in \Tree(S_0)\setminus\Stop(S_0)$, $q \in L_Q \cap 2 B_Q$ and $p \in L_P \cap 2B_P$. Then
\begin{align*}
    |\Pi^\perp(p) - \Pi^\perp (q) | \leq C \alpha \ps{ |\Pi(p) - \Pi(q)| + 2 \ell(P) + 2 \ell(Q)}.
\end{align*}
\end{lemma}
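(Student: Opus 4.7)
The plan is as follows. Because $P, Q \in \Tree(S_0) \setminus \Stop(S_0)$, neither cube has triggered the $\BBS$ stopping condition; up to an $O(\ve_0)$ error coming from comparing the normals of the roots $R \in \mathcal{N}(R_0)$ of the relevant subtrees with $N_{S_0} = N_{R_0}$ (which is controlled by Reifenberg flatness of neighboring cubes), this yields
$$|N_P - N_{S_0}|,\ |N_Q - N_{S_0}| \le 2\alpha.$$
Choose coordinates so that $L_{S_0} = \R^n \times \{0\}$ and $N_{S_0} = e_{n+1}$. Then $L_P$ and $L_Q$ are graphs over $L_{S_0}$ of affine maps of Lipschitz constant $\le C\alpha$, and, crucially, any vector $v$ tangent to $L_P$ or $L_Q$ obeys
$$|\Pi^\perp(v)| \le C\alpha\,|\Pi(v)|. \qquad (\ast)$$
This is the sole geometric input that does the work.

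I would first settle the case $P \subset Q$. An extension of Lemma \ref{l:dist-x-L} to points of $L_P \cap 2B_P$ (obtained by the same proof with a larger constant) gives $\dist(p, L_Q) \lesssim \ve_0 \ell(Q)$. Letting $\bar p$ be the orthogonal projection of $p$ onto $L_Q$, split $p - q = (p - \bar p) + (\bar p - q)$: the first summand has norm $\lesssim \ve_0 \ell(Q)$, and the second is parallel to $L_Q$, so $(\ast)$ applies. A straightforward triangle-inequality computation then yields
$$|\Pi^\perp(p) - \Pi^\perp(q)| \lesssim \ve_0 \ell(Q) + \alpha\bigl(|\Pi(p) - \Pi(q)| + \ve_0 \ell(Q)\bigr),$$
which, on choosing $\ve_0 \ll \alpha$ (permissible since $\ve_0$ depends only on dimension while $\alpha$ is an independent stopping parameter), closes this subcase.

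For the general (incomparable) case, I would pass through an intermediate reference cube $T \in \Tree(S_0) \setminus \Stop(S_0)$ chosen so that both $P$ and $Q$ sit inside $T$ (or inside cubes of its generation adjacent to $T$ in the extended-cube sense) \emph{and} $\ell(T) \lesssim \ell(P) + \ell(Q) + |z_P - z_Q|$. Since $T$ strictly contains $P \in \Tree(S_0)\setminus\Stop(S_0)$, it cannot itself be a stopping cube. Applying the previous case twice, to $(P,T)$ and $(Q,T)$, and combining through the projections of $p$ and $q$ onto $L_T$ (using $(\ast)$ on the $L_T$-tangent component), one obtains
$$|\Pi^\perp(p) - \Pi^\perp(q)| \le C\alpha\bigl(|\Pi(p) - \Pi(q)| + \ell(T)\bigr).$$
Then using $|z_P - z_Q| \le \ell(P) + \ell(Q) + |p - q|$ and $|p - q| \le |\Pi(p) - \Pi(q)| + |\Pi^\perp(p) - \Pi^\perp(q)|$, one rewrites $\ell(T)$ in terms of these quantities and absorbs the $|\Pi^\perp(p) - \Pi^\perp(q)|$ piece into the left-hand side (which is legal for $\alpha$ small enough), arriving at the claimed inequality.

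The main obstacle I anticipate is this self-referential absorption in the general case: the estimate of $\ell(T)$ reintroduces the very quantity $|\Pi^\perp(p) - \Pi^\perp(q)|$ that we wish to bound, and only the smallness of $\alpha$ closes the loop. A secondary technical point is the extended-cube structure of $S_0 = \bigcup_{R \in \mathcal{N}(R_0)} R$: when $P$ and $Q$ lie in subtrees rooted at distinct neighbors of $R_0$, so that no minimal dyadic ancestor exists inside $S_0$, one must use Reifenberg flatness at scales $\le 100\ell(R_0)$, as ensured by \rf{eqreif0}, to show that $L_{S_0}$ approximates $\partial\Omega^+ \cap S_0$ to within $\ve_0\,\ell(R_0)$ and therefore can play the role of a universal reference plane in place of a dyadic common ancestor.
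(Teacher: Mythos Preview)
Your proposal is correct and follows essentially the standard argument to which the paper defers (Lemma 7.15 in \cite{Tolsa-llibre}): the non-triggering of the $\BBS$ condition gives $|N_P - N_{S_0}|,\,|N_Q - N_{S_0}| \lesssim \alpha$, one passes to a reference cube $T \in \Tree(S_0)\setminus\Stop(S_0)$ of side $\lesssim \ell(P)+\ell(Q)+|z_P-z_Q|$ whose plane $L_T$ is $\ve_0\ell(T)$-close to both $p$ and $q$, and then the tangent estimate $(\ast)$ plus the absorption of $C\alpha\,|\Pi^\perp(p)-\Pi^\perp(q)|$ (valid since $\alpha$ is a small parameter to be fixed) yields the claim. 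Your handling of the extended-cube issue is also right: when $P$ and $Q$ lie in different subtrees one takes $T$ to be an ancestor of $P$ alone with $q \in CB_T$, and Reifenberg flatness (rather than Lemma~\ref{l:dist-x-L}) supplies $\dist(q,L_T)\lesssim \ve_0\ell(T)$.
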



Next we need to define the following auxiliary function:
\begin{align}
    d(x) := \inf_{Q \in \Tree(S_0)\setminus \Stop(S_0)} \ps{ \dist(x, Q) + \ell(Q)} \quad\mbox{ for } x \in \R^{n+1}.
\end{align}

The next lemma can be found as Lemma 7.19 of \cite{Tolsa-llibre}.

\begin{lemma} \label{l:Lip-a}
    Let $\ve_0>0$ and $\alpha>0$ be sufficiently small. Then for any $x,y\in \R^{n+1}$,
    \begin{align*}
        \av{\Pi^\perp(x) - \Pi^\perp(y)} \lesssim \alpha \av{\Pi(x)- \Pi(y)} + d(x) + d(y).
    \end{align*}
\end{lemma}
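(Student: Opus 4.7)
\medskip

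\noindent
\textbf{Proof proposal for Lemma \ref{l:Lip-a}.}
The plan is to reduce the general statement to the previous lemma, which compares $\Pi^\perp$-values of points lying on approximating planes $L_Q, L_P$ of cubes in $\Tree(S_0)\setminus \Stop(S_0)$. The auxiliary function $d(x)$ precisely measures how close $x$ is to such a cube, so we will ``project'' both $x$ and $y$ onto some near-optimal approximating plane and then invoke the previous lemma.

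First I would dispense with the trivial regime: since $\Pi^\perp$ is $1$-Lipschitz, $|\Pi^\perp(x)-\Pi^\perp(y)|\leq |x-y|\leq |\Pi(x)-\Pi(y)|+|\Pi^\perp(x)-\Pi^\perp(y)|$, so if $d(x)+d(y)\gtrsim |x-y|$ the conclusion is immediate. Hence I may assume $d(x)+d(y)\ll |x-y|$. Fix a small $\eta>0$ and, using the definition of $d$, pick $Q_x,Q_y\in \Tree(S_0)\setminus\Stop(S_0)$ with
$$\dist(x,Q_x)+\ell(Q_x)\leq d(x)+\eta,\qquad \dist(y,Q_y)+\ell(Q_y)\leq d(y)+\eta,$$
and select points $q_x\in Q_x$, $q_y\in Q_y$ almost realizing these distances. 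Since $q_x\in Q_x\subset B_{Q_x}$ and by the Reifenberg flatness estimate \rf{eqreif0}, there is $p_x\in L_{Q_x}$ with $|q_x-p_x|\leq \ve_0\,\ell(Q_x)$; in particular $p_x\in 2B_{Q_x}$, and $|x-p_x|\leq d(x)+(1+\ve_0)\eta$. Do the same for $y$ to get $p_y\in L_{Q_y}\cap 2B_{Q_y}$.

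Now I can apply the previous lemma to $p_x\in L_{Q_x}\cap 2B_{Q_x}$ and $p_y\in L_{Q_y}\cap 2B_{Q_y}$:
$$|\Pi^\perp(p_x)-\Pi^\perp(p_y)|\leq C\alpha\bigl(|\Pi(p_x)-\Pi(p_y)|+2\ell(Q_x)+2\ell(Q_y)\bigr).$$
Using the $1$-Lipschitz property of $\Pi$ together with $|x-p_x|\lesssim d(x)+\eta$ and $|y-p_y|\lesssim d(y)+\eta$, the right-hand side is bounded by $C\alpha\bigl(|\Pi(x)-\Pi(y)|+C'(d(x)+d(y)+\eta)\bigr)$. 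Combining this with the triangle inequality
$$|\Pi^\perp(x)-\Pi^\perp(y)|\leq |x-p_x|+|\Pi^\perp(p_x)-\Pi^\perp(p_y)|+|y-p_y|$$
and absorbing the $\alpha\bigl(d(x)+d(y)\bigr)$ term into $d(x)+d(y)$, I obtain the desired inequality after letting $\eta\to 0$.

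The only real subtlety, and the step I expect to be the main technical point, is verifying that the cubes $Q_x,Q_y$ produced by the infimum really lie in $\Tree(S_0)\setminus\Stop(S_0)$ at a scale where (i) the Reifenberg-flatness bound gives $p_x,p_y$ in $2B_{Q_x}$, $2B_{Q_y}$ respectively, and (ii) the hypotheses of the previous lemma apply, which in turn require the smallness of $\alpha$ (and of $\ve_0$ relative to $\alpha$). Once those smallness conditions are imposed, the rest of the argument is bookkeeping.
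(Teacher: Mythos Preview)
Your argument is correct and is exactly the standard reduction to the preceding lemma (Lemma 7.15 in \cite{Tolsa-llibre}) that the paper's cited reference, Lemma~7.19 of \cite{Tolsa-llibre}, carries out. The ``subtlety'' you flag at the end is not an issue: the infimum defining $d$ is taken over $\Tree(S_0)\setminus\Stop(S_0)$ by definition, so the near-minimizers $Q_x,Q_y$ automatically lie in that family, and the Reifenberg flatness bound \rf{eqreif0} then places $p_x,p_y$ in $2B_{Q_x},2B_{Q_y}$ as you need.
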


We denote
$$Z_0 = \{x\in \R^{n+1}:d(x)=0\}.$$
It is easy to check that $G(S_0)\subset Z_0$.
Also, as in Lemma 7.18 from \cite{Tolsa-llibre}, we have:

\begin{lemma} \label{l:z0}
We have $\HH^n(Z_0)<\infty$ and
$$\omega^+|_{Z_0} = \rho_{Z_0}\,\HH^n|_{Z_0},$$
where $\rho_{Z_0}$ is a function satisfying $C^{1}\delta\leq \rho_{Z_0}\leq CA$.
\end{lemma}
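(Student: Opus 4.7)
The plan is to establish uniform two-sided density bounds for $\omega^+$ at every point $x \in Z_0$ and every scale $r \leq c\,\ell(R_0)$, and then to read off both conclusions from standard measure-theoretic arguments. By the definition of $d$, every $x \in Z_0$ is approximated at arbitrarily small scale by cubes in $\Tree(S_0)\setminus\Stop(S_0)$: there exists a sequence $Q_k \in \Tree(S_0)\setminus\Stop(S_0)$ with $\dist(x,Q_k) + \ell(Q_k) \to 0$. Given any small $r > 0$, I would first select such a $Q=Q_{k(r)}$ with $\ell(Q) \approx r$ and $\dist(x,Q) \lesssim r$, so that $B_Q$ and $B(x,r)$ are comparable balls centered near $\partial\Omega^+$ (one contained in a bounded dilate of the other).

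Because $Q$ is not a stopping cube, neither the high-density nor the low-density criterion of Subsection \ref{secstop} is triggered at $Q$, so
\begin{equation*}
\delta\,\Theta_{\omega^+}(B_{R_0}) < \Theta_{\omega^+}(B_Q) \leq M\,\Theta_{\omega^+}(B_{R_0}).
\end{equation*}
Rewriting this as a mass estimate on $B_Q$ and then transferring to $B(x,r)$ via the doubling property of harmonic measure on NTA domains (and, if needed, the change-of-pole formula from Theorem \ref{teounif} to handle the pole), yields, for all $x \in Z_0$ and all $0 < r \leq c\,\ell(R_0)$,
\begin{equation*}
c\,\delta\,\Theta_{\omega^+}(B_{R_0})\,r^n \;\leq\; \omega^+(B(x,r)) \;\leq\; C\,M\,\Theta_{\omega^+}(B_{R_0})\,r^n.
\end{equation*}

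With these uniform density bounds in hand, the rest is standard. The upper density bound together with the density characterization of Hausdorff measure gives
\begin{equation*}
\HH^n(Z_0) \;\lesssim\; \frac{\omega^+(Z_0)}{M\,\Theta_{\omega^+}(B_{R_0})} \;\lesssim_M\; \frac{\omega^+(R_0)}{\Theta_{\omega^+}(B_{R_0})} \;\lesssim_M\; \ell(R_0)^n < \infty.
\end{equation*}
The lower density bound forces $\omega^+|_{Z_0}$ to be absolutely continuous with respect to $\HH^n|_{Z_0}$ (any $\HH^n$-null subset of $Z_0$ is covered by balls of controlled $\omega^+$-measure and Vitali covering then gives zero $\omega^+$-mass). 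By the Radon--Nikodym theorem together with differentiation of measures on sets of finite $\HH^n$-measure (Besicovitch-type theorem), the density
\begin{equation*}
\rho_{Z_0}(x) = \lim_{r\to 0}\frac{\omega^+(B(x,r))}{\HH^n(Z_0 \cap B(x,r))}
\end{equation*}
exists $\HH^n|_{Z_0}$-a.e.\ and inherits the claimed two-sided bound from the pointwise density estimates (modulo the universal multiplicative constant coming from the comparability of upper density with Hausdorff density).

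The main obstacle is the density-transfer step: for a point $x \in Z_0 \setminus G(S_0)$ that is only approached by, but not contained in, tree cubes, one must fit a non-stopping cube $Q$ at every dyadic scale $r$ near $x$ and carefully compare $\omega^+(B_Q)$ with $\omega^+(B(x,r))$. This requires combining the Reifenberg flatness hypothesis \eqref{eqreif0}, which rules out large pieces of $\partial\Omega^+$ outside the neighborhood of the tree cubes, with the NTA doubling of $\omega^+$ and Theorem \ref{teounif} to compare harmonic measures with poles in appropriate corkscrew positions. Once this comparison is in place, the finiteness of $\HH^n(Z_0)$ and the density bounds follow in routine fashion.
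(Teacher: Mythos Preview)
Your approach is the correct one and matches the argument the paper cites (Lemma 7.18 of \cite{Tolsa-llibre}): obtain two-sided $n$-density bounds for $\omega^+$ on $Z_0$ by passing to ancestors of small tree cubes (which inherit the non-stopping density bounds and are comparable to $B(x,r)$ by doubling), then apply standard density theorems. You have, however, switched the roles of the two bounds. The finiteness $\HH^n(Z_0)<\infty$ follows from the \emph{lower} bound $\omega^+(B(x,r))\gtrsim \delta\,\Theta_{\omega^+}(B_{R_0})\,r^n$ via a $5r$-covering argument (disjoint balls centered in $Z_0$ have total $r^n$-sum controlled by $\omega^+$ of a neighborhood), while the absolute continuity $\omega^+|_{Z_0}\ll\HH^n|_{Z_0}$ follows from the \emph{upper} bound $\omega^+(B(x,r))\lesssim M\,\Theta_{\omega^+}(B_{R_0})\,r^n$ (an $\HH^n$-null set admits covers with arbitrarily small $\sum r_i^n$, hence small $\omega^+$-mass). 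In fact your parenthetical justifications already use the correct bounds; only the labels are reversed.

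Two further minor points. First, the Radon--Nikodym quotient should read $\omega^+(Z_0\cap B(x,r))/\HH^n(Z_0\cap B(x,r))$; the cleanest way to get the two-sided bound on $\rho_{Z_0}$ is to apply density comparison theorems (e.g.\ Mattila, Theorem 6.9) directly to Borel subsets $E\subset Z_0$, yielding $c\,\delta\,\Theta_{\omega^+}(B_{R_0})\,\HH^n(E)\le \omega^+(E)\le C\,M\,\Theta_{\omega^+}(B_{R_0})\,\HH^n(E)$. Second, the ``obstacle'' you flag is milder than suggested: once a small $Q_k\in\Tree(S_0)\setminus\Stop(S_0)$ is chosen with $\dist(x,Q_k)+\ell(Q_k)\ll r$, its ancestor $Q$ of side $\approx r$ automatically lies in $\Tree(S_0)\setminus\Stop(S_0)$ and satisfies $\dist(x,Q)\ll r$, so plain doubling of $\omega^+$ (both balls centered on $\partial\Omega^+$, comparable radii, nearby centers) handles the transfer --- no Reifenberg flatness or change-of-pole is needed here.
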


Let $\DD_{L_{S_0}}$ be the set of dyadic $n$-dimensional cubes in $L_{S_0}$.
For $p\in L_{S_0}$, we denote
$$D(p) = \inf_{x\in \Pi^{-1}(p)}d(x),$$
and for $I \in \DD_{L_{S_0}}$, 
\begin{align*}
    D(I) := \inf_{p \in I} D(x).
\end{align*}
Set
\begin{align*}
    \Whit := \{ I \mbox{ maximal in } \DD_{L_{S_0}} \, :\, \ell(I)  < 20^{-1}\,  D(I)\}.
\end{align*}

We summarize the properties of the cubes in $\Whit$ in the following lemma. We index $\Whit$ as $\{R_i\}_{i \in I_\Whit}$.

\begin{lemma}\label{l:whitney-dec}
    The cubes $R_i$, $i\in I_\Whit$, have disjoint interiors in $L_{S_0}$ and satisfy the following properties:
    \begin{enumerate}
        \item If $x \in 15R_i$, then $5 \ell(R_i) \leq D(x) \leq 50 \ell(R_i)$.
        \item There exists an absolute constant $c>1$ such that if $15R_i \cap 15R_j \neq \varnothing$, then
        \begin{align}
            c^{-1} \ell(R_i) \leq \ell(R_j) \leq c \ell(R_i).
        \end{align}
        \item For each $i \in I_\Whit$, there are at most $N_0$ cubes $R_j$ such that $15 R_i \cap 15R_j \neq \varnothing$, where $N_0$ is some absolute constant depending only on $n$.
        \item $L_{S_0}\setminus \Pi(Z_0) = \bigcup_{i\in I_\Whit} R_i$.
    \end{enumerate}
\end{lemma}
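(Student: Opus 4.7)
The plan is to carry out a standard Whitney-type decomposition argument, whose backbone is the fact that the function $D$ on $L_{S_0}$ is $1$-Lipschitz. First I would verify this: since $d$ is clearly $1$-Lipschitz on $\R^{n+1}$ (an infimum of functions of the form $x\mapsto \dist(x,Q)+\ell(Q)$), and given $p,p'\in L_{S_0}$ one can translate any fiber representative $x\in\Pi^{-1}(p)$ by $p'-p$ to obtain $x+(p'-p)\in\Pi^{-1}(p')$ with $d(x+(p'-p))\le d(x)+|p-p'|$, taking infima yields $|D(p')-D(p)|\le |p-p'|$.

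Next I would establish property (1). For the upper bound, by maximality of $R_i$ its dyadic parent $\widehat R_i$ satisfies $\ell(\widehat R_i)\ge 20^{-1}D(\widehat R_i)$, so there is a point $q\in \widehat R_i$ with $D(q)\le 40\,\ell(R_i)+\eta$ for arbitrarily small $\eta>0$; for any $x\in 15 R_i$ one has $|x-q|\lesssim \ell(R_i)$, and Lipschitzness gives $D(x)\le 50\,\ell(R_i)$ (after verifying the constant). For the lower bound, the membership $R_i\in\Whit$ gives $D(p)>20\,\ell(R_i)$ for every $p\in R_i$; picking $p\in R_i$ closest to $x\in 15R_i$ and again applying the Lipschitz estimate yields $D(x)\ge 20\,\ell(R_i)-C\,\ell(R_i)\ge 5\,\ell(R_i)$, where the constant $20$ in the definition of $\Whit$ was chosen precisely so that this difference remains $\ge 5\,\ell(R_i)$.

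Once (1) is in hand, property (2) is immediate: if $15R_i\cap 15R_j\ne\varnothing$, pick any $x$ in the intersection, and apply (1) to both cubes to bound $5\,\ell(R_i)\le D(x)\le 50\,\ell(R_j)$ and symmetrically, yielding the comparability of side lengths with $c=10$. Property (3) then follows from (2) combined with the fact that the $R_j$'s have pairwise disjoint interiors in $L_{S_0}$: all cubes meeting $15R_i$ have side length comparable to $\ell(R_i)$, so they live inside a ball of radius $\lesssim \ell(R_i)$ around $R_i$, and a volume count in $L_{S_0}\cong\R^n$ bounds their number by a dimensional constant $N_0$.

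Finally, for (4), I would argue both inclusions. If $p\in L_{S_0}\setminus \Pi(Z_0)$, then $D(p)>0$: indeed $p\in\Pi(Z_0)$ iff the fiber $\Pi^{-1}(p)$ meets $\{d=0\}$, and if it does not, the $1$-Lipschitzness of $d$ together with compactness considerations on the fiber inside a large ball (where far-away points have $d$ controlled by $\dist(\,\cdot\,,S_0)+\ell(S_0)\gtrsim D(p)$) show $D(p)>0$. By Lipschitzness of $D$, every sufficiently small dyadic cube containing $p$ satisfies $\ell<20^{-1}D$, so a maximal such cube $R_i\ni p$ exists. Conversely, if $p\in\Pi(Z_0)$, then $D(p)=0$ and hence $D(R)=0$ for every dyadic cube $R\ni p$, so $p$ lies in no Whitney cube. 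The disjointness of interiors comes automatically from the maximality rule.

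The bulk of the work is bookkeeping with constants; the one point deserving slight care is confirming $D(p)>0$ away from $\Pi(Z_0)$, but this reduces to the definition of $Z_0$ and the $1$-Lipschitz property of $d$, so no genuine obstacle is expected.
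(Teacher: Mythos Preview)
Your proposal is correct and follows exactly the standard Whitney-decomposition argument to which the paper defers (it cites Lemma~7.20 in \cite{Tolsa-llibre} and gives no further proof). The only caveat is that the precise numerical constants $5$ and $50$ in (1) depend on the metric convention for ``$15R_i$'' and may pick up a factor of $\sqrt{n}$ in the Euclidean norm; you should either work in the $\ell^\infty$ norm on $L_{S_0}$ or note explicitly that the constants are dimensional, but this is purely cosmetic and does not affect the argument.
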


The proof of this result is standard. See for example Lemma 7.20 in \cite{Tolsa-llibre}.

Let
\begin{align}
B_0 = B(\Pi(z_{S_0}), 10 r(B_{S_0})).
\end{align}
Certainly, $\dist(z_{S_0}, L_{S_0}) < 10^{-1} \ell(S_0)$, and so we have that
\begin{align}
\Pi(S_0) \subset \Pi(B_0) \subset 2 B_0 \cap L_{S_0}.
\end{align}
Now set
\begin{align*}
    I_0 := \{ i \in I_\Whit\, :\, R_i \cap  B_0\neq \varnothing\}.
\end{align*}

The next lemma is proven exactly as Lemma 7.21 from \cite{Tolsa-llibre}.

\begin{lemma}\label{lem73}
    The following holds.
    \begin{itemize}
        \item If $i \in I_0$, then $\ell(R_i) \lesssim \ell(S_0)$ and $3R_i \subset L_{S_0} \cap 1.2 B_0$.
        \item If $i \notin I_0$, then
        \begin{align*}
            \ell(R_i) \approx \dist(\Pi(z_{S_0}), R_i) \approx |z_{S_0} - x| \gtrsim \ell(S_0) \mbox{ for all } x \in R_i.
        \end{align*}
    \end{itemize}
\end{lemma}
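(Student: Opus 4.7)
The plan is to run the standard Whitney-decomposition argument, exploiting two basic facts: $D$ is $1$-Lipschitz on $L_{S_0}$ (because it is the infimum of the $1$-Lipschitz functions $p\mapsto\dist(x,Q)+\ell(Q)$ over $Q\in\Tree(S_0)\setminus\Stop(S_0)$, infimized over $x\in\Pi^{-1}(p)$), and, by the maximality condition defining $\Whit$, every Whitney cube satisfies $D(R_i)\approx\ell(R_i)$. For the latter, note that the $\DD_{L_{S_0}}$-parent $\wh R_i$ violates the Whitney condition, so $D(\wh R_i)\leq 20\,\ell(\wh R_i)=40\,\ell(R_i)$; Lipschitz continuity of $D$ then transfers this bound to $R_i$, giving $D(R_i)\lesssim\ell(R_i)$, while $\ell(R_i)<\tfrac1{20}D(R_i)$ by definition.

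First, I would anchor the size of $D$ near $S_0$. Taking $Q=R_0\in\Tree(S_0)\setminus\Stop(S_0)$ and $x=z_{S_0}\in R_0$ in the definitions of $d$ and $D$ shows $D(\Pi(z_{S_0}))\leq\ell(R_0)=\ell(S_0)$. Combined with the Lipschitz bound,
\[
D(p)\leq\ell(S_0)+|p-\Pi(z_{S_0})|\quad\text{for every }p\in L_{S_0}.
\]
For $i\in I_0$, pick $p\in R_i\cap B_0$; then $|p-\Pi(z_{S_0})|\leq r(B_0)\lesssim\ell(S_0)$, so $D(p)\lesssim\ell(S_0)$ and therefore $\ell(R_i)\lesssim D(R_i)\leq D(p)\lesssim\ell(S_0)$. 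The containment $3R_i\subset L_{S_0}\cap 1.2 B_0$ then follows because $\diam(3R_i)\lesssim\ell(R_i)\lesssim\ell(S_0)\approx r(B_{S_0})$, which is a small fraction of $r(B_0)=10\,r(B_{S_0})$, so that expanding from the point $p\in B_0$ keeps $3R_i$ well inside $1.2B_0$.

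For $i\notin I_0$ the matching lower bound on $D$ is the substantive step. Every $Q\in\Tree(S_0)\setminus\Stop(S_0)$ is contained in $S_0\subset B_{S_0}$, so for any $x\in\Pi^{-1}(p)$,
\[
\dist(x,Q)+\ell(Q)\geq\dist(x,B_{S_0})\geq|p-\Pi(z_{S_0})|-C\ell(S_0),
\]
after using $|\Pi(z_{S_0})-z_{S_0}|\lesssim\ve_0\ell(S_0)$ from \rf{eqreif0}. Taking infimum over $x$ and $Q$ yields $D(p)\geq|p-\Pi(z_{S_0})|-C\ell(S_0)$. Since $R_i\cap B_0=\varnothing$ implies $\dist(\Pi(z_{S_0}),R_i)\geq r(B_0)\gtrsim\ell(S_0)$, for any $p\in R_i$ this lower bound gives $D(p)\gtrsim|p-\Pi(z_{S_0})|$. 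Combined with the previous upper bound, $D(R_i)\approx\dist(\Pi(z_{S_0}),R_i)$, and thus $\ell(R_i)\approx D(R_i)\approx\dist(\Pi(z_{S_0}),R_i)$.

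Finally, for $x\in R_i$ the Pythagorean identity in $\R^{n+1}=L_{S_0}\oplus L_{S_0}^\perp$ gives
\[
|z_{S_0}-x|^2=|z_{S_0}-\Pi(z_{S_0})|^2+|\Pi(z_{S_0})-x|^2,
\]
and $|z_{S_0}-\Pi(z_{S_0})|\lesssim\ve_0\ell(S_0)$ is negligible compared with $|\Pi(z_{S_0})-x|\geq\dist(\Pi(z_{S_0}),R_i)\gtrsim\ell(S_0)$, so $|z_{S_0}-x|\approx\dist(\Pi(z_{S_0}),R_i)$. I do not expect a serious obstacle here; the only mildly delicate point is tracking constants to guarantee $3R_i\subset 1.2B_0$ in the first case, which comes down to choosing $\ve_0$ small enough to make $|\Pi(z_{S_0})-z_{S_0}|$ and $|p-\Pi(z_{S_0})|$ compatible with the Whitney ratio.
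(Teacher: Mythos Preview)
Your argument is correct and is precisely the standard Whitney-decomposition computation that the paper has in mind: the paper gives no proof of its own here but simply cites Lemma~7.21 of \cite{Tolsa-llibre}, whose proof proceeds exactly as you outline (Lipschitz control on $D$, the maximality bound $D(R_i)\approx\ell(R_i)$, and the anchoring $D(\Pi(z_{S_0}))\leq\ell(S_0)$). The only point worth noting is that your invocation of $\ve_0$ in the lower bound $D(p)\geq|p-\Pi(z_{S_0})|-C\ell(S_0)$ is unnecessary: since every $Q\in\Tree(S_0)\setminus\Stop(S_0)$ lies in $S_0\subset B_{S_0}$, one gets directly $\dist(x,Q)\geq|x-z_{S_0}|-r(B_{S_0})\geq|p-\Pi(z_{S_0})|-r(B_{S_0})$, and $r(B_{S_0})\approx\ell(S_0)$ already.
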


The next lemma is analogous to Lemma 7.22 from \cite{Tolsa-llibre}.

\begin{lemma}\label{l:whitney-prop1}
    Let $i \in I_0$; there exists a cube $Q = Q_i \in \Tree(S_0)\setminus\Stop(S_0)$ such that
    \begin{align}
        & \ell(R_i) \lesssim \ell(Q_i) \lesssim \ell(R_i); \\
        & \dist(R_i, \Pi(Q_i)) \lesssim \ell(R_i).
    \end{align}
\end{lemma}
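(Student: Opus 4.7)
The plan is to extract $Q_i$ from the very definition of $D$, using that Lemma \ref{l:whitney-dec}(1) forces $D(p) \approx \ell(R_i)$ for any $p$ in (a dilate of) $R_i$. Concretely, pick the center $p_i$ of $R_i$; since $p_i \in 15R_i$, Lemma \ref{l:whitney-dec}(1) gives $5\ell(R_i) \leq D(p_i) \leq 50\ell(R_i)$. By definition of $D$ we may choose $y_i \in \Pi^{-1}(p_i)$ with $d(y_i) \leq 2 D(p_i) \leq 100\,\ell(R_i)$, and then by definition of $d$ a cube $Q \in \Tree(S_0)\setminus\Stop(S_0)$ with
\[
\dist(y_i, Q) + \ell(Q) \leq 200\,\ell(R_i).
\]
Moreover, for every such tree cube $Q'$ one has the lower bound $\dist(y_i, Q') + \ell(Q') \geq d(y_i) \geq D(p_i) \geq 5\,\ell(R_i)$.

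Now I promote $Q$ to a cube $Q_i$ with $\ell(Q_i) \approx \ell(R_i)$. If $\ell(Q) \geq \ell(R_i)$, the upper bound $\ell(Q) \leq 200\,\ell(R_i)$ already makes $\ell(Q) \approx \ell(R_i)$ and I set $Q_i:=Q$. If instead $\ell(Q) < \ell(R_i)$, I take $Q_i$ to be the unique ancestor of $Q$ at the scale $r_0^{-1}\ell(Q) \leq \ell(Q_i) \leq \ell(R_i)$ closest to $\ell(R_i)$, so that $\ell(Q_i) \approx \ell(R_i)$. By Lemma \ref{lem73}, $\ell(R_i) \lesssim \ell(S_0)$, so for a small enough implicit constant this ancestor fits inside the unique $R' \in \mathcal{N}(R_0)$ containing $Q$, hence $Q_i \in \DD(R')$. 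Also $Q_i$ cannot be a stop cube: $Q \subsetneq Q_i$ together with $Q_i \in \Stop(R')$ would put $Q$ strictly inside a stop cube, contradicting $Q \in \Tree(S_0)\setminus \Stop(S_0)$. Thus $Q_i \in \Tree(S_0)\setminus\Stop(S_0)$ and $\ell(Q_i)\approx \ell(R_i)$, giving the first estimate.

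For the second estimate, in either case $Q \subset Q_i$, so $\dist(y_i, Q_i) \leq \dist(y_i,Q) \leq 200\,\ell(R_i)$. Projecting onto $L_{S_0}$ is $1$-Lipschitz, so $\dist(p_i, \Pi(Q_i)) \leq \dist(y_i, Q_i) \lesssim \ell(R_i)$, and since $p_i \in R_i$,
\[
\dist(R_i, \Pi(Q_i)) \leq \dist(p_i, \Pi(Q_i)) \lesssim \ell(R_i).
\]

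The main issue to watch is the ancestor step: I need the ancestor to stay inside its cube $R' \in \mathcal{N}(R_0)$ (so that it really lies in $\Tree(S_0)$) and to avoid becoming a stop cube. The first is handled by Lemma \ref{lem73} (which makes $\ell(R_i) \lesssim \ell(S_0) = \ell(R')$), the second by the defining property of the tree. Apart from this bookkeeping the argument is essentially a direct translation between $D(p_i)$ on the one side and $\dist + \ell$ of an approximately optimal tree cube on the other; this is exactly how the analogous Lemma 7.22 of \cite{Tolsa-llibre} proceeds.
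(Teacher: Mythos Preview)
Your argument is correct and follows exactly the approach of Lemma~7.22 in \cite{Tolsa-llibre}, which is what the paper defers to. The only point worth tightening is the ancestor step: since Lemma~\ref{lem73} gives only $\ell(R_i)\lesssim\ell(S_0)$ (not $\leq$), the phrase ``for a small enough implicit constant'' is not quite right---instead, simply cap the ancestor at $R'$ whenever it would otherwise overshoot, noting that in that case $\ell(R')=\ell(R_0)\approx\ell(R_i)$ anyway, so both estimates still hold and $Q_i=R'\in\Tree(S_0)\setminus\Stop(S_0)$.
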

\vv

\subsection{The approximating Lipschitz  graph}\label{subsec99}
Observe that, by Lemma \ref{l:Lip-a}, for every $x,y\in Z_0$ satisfy
$$\av{\Pi^\perp(x) - \Pi^\perp(y)} \lesssim \alpha \av{\Pi(x)- \Pi(y)}.$$
Therefore, the map $\Pi:Z_0\to L_{S_0}$ is injective, and we can define a Lipschitz function $A$ on $\Pi(Z_0)$
by requiring
\begin{equation}\label{eqaz0}
A(\Pi(x)):= \Pi^\bot(x)\quad\mbox{ for $x\in Z_0$.}
\end{equation}

Next we  intend to extend the function $A$ to the whole $L_{S_0}$. To this end,
for $i \in I_0$, first we denote by $A_i$ the affine function $L_{S_0} \to L_{S_0}^\perp$ whose graph is the $n$-plane $L_{Q_i}$. Notice that, for each $i \in I_0$, $A_i$ is Lipschitz with constant $\lesssim \alpha$.

On the other hand, for $i \in I_\Whit\setminus I_0$, we put $A_i \equiv 0$, so that its graph is just $L_{S_0}$.
\vv

The following lemma is proven in the same way as Lemma 7.23 in \cite{Tolsa-llibre}.

\begin{lemma}\label{lemoqa}
Assume that $\ve_0\ll\alpha$.
    If $10R_i  \cap 10R_j \neq \varnothing$ for some $i,j \in I_{\Whit}$, then
    \begin{enumerate}
        \item $\dist(Q_i, Q_j) \lesssim \ell(R_i)$ if, moreover, $i,j \in I_0$;
        \item $|A_i(x) - A_j(x)| \lesssim \ve_0 \ell(R_i)$ for $x \in 100 R_i$;
        \item $|\nabla A_i(x) - \nabla A_j(x)| \lesssim\ve_0$.
    \end{enumerate}
\end{lemma}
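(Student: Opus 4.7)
The plan is to use Lemma \ref{l:whitney-prop1} to tie each Whitney cube $R_s$ to a tree cube $Q_s$ of comparable size, combine this with the Whitney adjacency Lemma \ref{l:whitney-dec} and the Reifenberg flatness assumption \eqref{eqreif0} to locate $Q_i$ and $Q_j$ inside a common ball of radius $\approx\ell(R_i)$, and then reduce both (2) and (3) to a single quantitative plane-comparison step. The configurations where one or both of $i,j$ lie outside $I_0$ will be dispatched separately using Lemma \ref{lem73}.

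For (1), assume $i,j\in I_0$. Lemma \ref{l:whitney-dec} gives $\ell(R_i)\approx\ell(R_j)$, and Lemma \ref{l:whitney-prop1} then yields $\ell(Q_i)\approx\ell(Q_j)\approx\ell(R_i)$ together with $\dist(R_s,\Pi(Q_s))\lesssim\ell(R_s)$ for $s\in\{i,j\}$. Combining these bounds with $10R_i\cap 10R_j\ne\varnothing$ gives the horizontal control $|\Pi(Q_i)-\Pi(Q_j)|\lesssim\ell(R_i)$. For the vertical part, pick any $q_i\in Q_i$ and apply Reifenberg flatness \eqref{eqreif0} at scale $r=C\ell(R_i)$ for a large absolute constant $C$: the piece $\partial\Omega^+\cap B(q_i,r)$ is within $\ve_0 r$ of some $n$-plane, and since $|\Pi(Q_j)-\Pi(q_i)|\le r/2$, the cube $Q_j\subset\partial\Omega^+$ cannot escape $B(q_i,2r)$, which yields $\dist(Q_i,Q_j)\lesssim\ell(R_i)$.

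The heart of the argument is (2) and (3) in the case $i,j\in I_0$. From (1) and the size estimates, one selects a ball $B$ of radius $r\approx\ell(R_i)$ lying inside $2B_{Q_i}\cap 2B_{Q_j}$ and meeting $\partial\Omega^+$ in a set of diameter $\approx r$. The defining property of $L_{Q_i},L_{Q_j}$ together with \eqref{eqreif0} forces each plane to lie within $\ve_0 r$ of $\partial\Omega^+\cap B$ inside $B$, so the two planes are $2\ve_0 r$-close to each other throughout $B$. Standard geometry of affine $n$-planes in $\R^{n+1}$ then gives an angle $\lesssim\ve_0$ between $L_{Q_i}$ and $L_{Q_j}$, which is (3), and linear extrapolation from $B$ to any $x\in 100R_i$ (at distance $\lesssim\ell(R_i)\approx r$ from $B$) yields (2). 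The mixed case when exactly one of $i,j$ belongs to $I_0$ reduces to the same angular argument applied to $L_{Q_i}$ and $L_{S_0}$: by Lemma \ref{lem73} one has $\ell(R_i)\approx\ell(S_0)\approx\ell(Q_i)$, and Lemma \ref{l:dist-x-L} with $P=Q_i$, $Q=S_0$ delivers the required $\ve_0\ell(S_0)$-proximity, while if $i,j\notin I_0$ both affine functions vanish and there is nothing to prove. I anticipate the main technical obstacle to be selecting the common witness ball $B$ in the main case, ensuring that it simultaneously sits inside $2B_{Q_i}$ and $2B_{Q_j}$ while capturing enough of $\partial\Omega^+$ to force the two-sided Reifenberg approximation; once this is in place, the rest is routine, and the standing hypothesis $\ve_0\ll\alpha$ is what guarantees that the resulting $\ve_0$-angle bound genuinely improves upon the a priori $\alpha$-tilt of $L_{Q_i},L_{Q_j}$ relative to $L_{S_0}$.
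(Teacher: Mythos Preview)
Your overall strategy matches the paper's (which simply cites Lemma~7.23 of \cite{Tolsa-llibre} and notes that Reifenberg flatness upgrades the constants to $\ve_0$), and your handling of (2), (3), and the mixed cases is sound. There is, however, a real gap in your proof of (1). Applying Reifenberg flatness at the single scale $r=C\ell(R_i)$ centered at $q_i$ only tells you about $\partial\Omega^+\cap B(q_i,r)$; it does \emph{not} force $Q_j$ into $B(q_i,2r)$. A priori, $q_j\in\partial\Omega^+$ could sit far above or below $q_i$, outside $B(q_i,r)$, and nothing in \eqref{eqreif0} at that one scale rules this out---the claim ``$Q_j$ cannot escape $B(q_i,2r)$'' is precisely what you are trying to prove. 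The vertical control comes instead from Lemma~\ref{l:Lip-a}: with $x=z_{Q_i}$, $y=z_{Q_j}$ one has $d(z_{Q_s})\le\ell(Q_s)\approx\ell(R_i)$, and your horizontal bound $|\Pi(z_{Q_i})-\Pi(z_{Q_j})|\lesssim\ell(R_i)$ then yields $|\Pi^\perp(z_{Q_i})-\Pi^\perp(z_{Q_j})|\lesssim\ell(R_i)$, hence (1). This is the route taken in \cite{Tolsa-llibre}.

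A smaller point for (2)--(3): the common ball $B\subset 2B_{Q_i}\cap 2B_{Q_j}$ of radius $\approx\ell(R_i)$ need not exist, since $|z_{Q_i}-z_{Q_j}|\lesssim\ell(R_i)$ only with an implicit constant that may exceed the ratio $r(2B_{Q_s})/\ell(R_i)$. The clean fix is to take instead a large ball $B^*\supset B_{Q_i}\cup B_{Q_j}$ of radius $\approx\ell(R_i)$ centered on $\partial\Omega^+$, use \eqref{eqreif0} to produce a best-fit plane $L^*$ for $B^*$, and compare each of $L_{Q_i},L_{Q_j}$ to $L^*$ on the respective balls $B_{Q_i},B_{Q_j}$; this gives both the $\ve_0$-angle bound and the $\ve_0\ell(R_i)$-distance bound after the routine extrapolation you describe.
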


Remark that \cite[Lemma 7.23]{Tolsa-llibre} yields estimates like (2) an (3) above with the implicit constants depending on $M$ and $\delta$. 
In our situation  the implicit constants above depend neither on $M$ nor on $\delta$ due to the Reifenberg flatness 
of $\partial\Omega^+$.

Next we complete the task of defining $A$ on $L_{S_0}$. To do so, we recur to a standard construction involving a partition of unity adapted to the Whitney decomposition $\{R_i\}_{i \in I_\Whit}$; this construction goes as follows: for $i \in I_\Whit$, we find a function $\wt \phi_i \in C^\infty(L_{S_0})$ such that
\begin{align}
    & \chi_{2R_i} \leq \wt \phi \leq \chi_{3R_i},\\
    & \|\nabla\wt \phi_i\|_\infty \lesssim \frac{1}{\ell(R_i)} \label{e:p-u-a},\\
    & \|\nabla^2\wt \phi_i \|_\infty \lesssim \frac{1}{\ell(R_i)^2} \label{e:p-u-b}.
\end{align}
Then for each $i \in I_\Whit$, we put
\begin{align}
    \phi_i := \frac{ \wt \phi_i}{\sum_{j \in I_\Whit} \wt \phi_j}.
\end{align}
Then it is immediate from the construction that $\{\phi_i\}_{i \in I_\Whit}$ is a partition of unity 
in $L_{S_0}\setminus \Pi(Z_0)$
subordinated to $\{3R_i\}_{i \in I_\Whit}$. Moreover, properties \eqref{e:p-u-a} and \eqref{e:p-u-b} together with Lemma \ref{l:whitney-dec} give that
\begin{align*}
    & \|\nabla \phi_i \|_\infty \lesssim \ell(R_i)^{-1} \ \mbox{ and }\\
    & \|\nabla^2 \phi_i\|_{\infty} \lesssim \ell(R_i)^{-2}.
\end{align*}
We now define  $A: L_{S_0}\setminus\Pi(Z_0) \to L_{S_0}^\perp$: if $x \in L_{S_0}$, we put
\begin{align*}
    A(x) := \sum_{i \in I_\Whit} \phi_i(x) A_i(x)= \sum_{i \in I_0} \phi_i(x) A_i(x).
\end{align*}
Recall that $A$ is defined in $Z_0$ by \rf{eqaz0}.

The following lemma, which follows in the same way as Lemmas 7.24 and 7.27 from \cite{Tolsa-llibre},
 will also be useful later on.
\begin{lemma}\label{l:A}
   The function $A: L_{S_0} \to L_{S_0}^\perp$ is supported on $1.2 B_0$. It is Lipschitz with slope at most $C \alpha$. Moreover, if $x \in 15 R_i$ for $i \in I_\Whit$, then
   \begin{align*}
       |\nabla^2 A(x)| \lesssim \frac{\ve_0}{\ell(R_i)}.
   \end{align*}
\end{lemma}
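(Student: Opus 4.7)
The support claim is immediate: since $A_i\equiv 0$ for $i\in I_\Whit\setminus I_0$ and each $\phi_i$ is supported in $3R_i$, one has $A=\sum_{i\in I_0}\phi_i A_i$, and Lemma \ref{lem73} gives $3R_i\subset L_{S_0}\cap 1.2 B_0$ whenever $i\in I_0$, so $\supp A\subset 1.2 B_0$.

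For the Lipschitz bound on $L_{S_0}\setminus \Pi(Z_0)$, I would differentiate termwise. Fix $x$ in the interior of some $R_i$ and note from Lemma \ref{l:whitney-dec}(3) that at most $N_0$ indices $j$ satisfy $\phi_j(x)\neq 0$, all with $\ell(R_j)\approx \ell(R_i)$. Using $\sum_j\phi_j=1$ and hence $\sum_j \nabla\phi_j=0$, subtract $A_i(x)$ and $\nabla A_i(x)$ termwise to get
$$\nabla A(x) = \nabla A_i(x) + \sum_j \nabla\phi_j(x)\bigl(A_j(x)-A_i(x)\bigr) + \sum_j \phi_j(x)\bigl(\nabla A_j(x)-\nabla A_i(x)\bigr).$$
Now $A_i$ is the affine function whose graph is $L_{Q_i}$, and maximality in the definition of $\Stop$ forces $|N_{Q_i}-N_{S_0}|<\alpha$ for every $Q_i\in\Tree(S_0)\setminus\Stop(S_0)$, hence $|\nabla A_i|\lesssim\alpha$. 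Parts (2)--(3) of Lemma \ref{lemoqa}, combined with $|\nabla\phi_j|\lesssim\ell(R_i)^{-1}$, bound the remaining two sums by $C\ve_0$. Together with the bound encoded in Lemma \ref{l:Lip-a} via \rf{eqaz0} on $Z_0$, this gives the global Lipschitz constant $\leq C\alpha$ once continuity of $A$ across $\partial\Pi(Z_0)$ is verified: if $p_k\to p_0\in\Pi(Z_0)$ with $p_k\in R_{i(k)}$, Lemma \ref{l:whitney-prop1} furnishes cubes $Q_{i(k)}\in\Tree(S_0)\setminus\Stop(S_0)$ accumulating near the preimage $x_0\in Z_0$ of $p_0$, and Lemma \ref{l:dist-x-L} together with a telescoping argument up the tree forces $|A(p_k)-A(p_0)|\to 0$.

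For the Hessian bound on $15R_i$, each $A_j$ is affine, so $\nabla^2 A_j\equiv 0$, and $\sum_j\nabla^2 \phi_j=0$. Subtracting $A_i(x)$ (resp.\ $\nabla A_i(x)$) in each sum yields
$$\nabla^2 A(x) = \sum_j \nabla^2\phi_j(x)\bigl(A_j(x)-A_i(x)\bigr) + 2\sum_j \nabla\phi_j(x)\otimes\bigl(\nabla A_j(x)-\nabla A_i(x)\bigr).$$
Parts (2)--(3) of Lemma \ref{lemoqa} give $|A_j(x)-A_i(x)|\lesssim\ve_0\ell(R_i)$ on $15R_i\subset 100R_i$ and $|\nabla A_j(x)-\nabla A_i(x)|\lesssim\ve_0$, while $|\nabla^2\phi_j|\lesssim\ell(R_i)^{-2}$ and $|\nabla\phi_j|\lesssim\ell(R_i)^{-1}$. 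Summing the $\leq N_0$ nonzero contributions produces $|\nabla^2 A(x)|\lesssim \ve_0/\ell(R_i)$.

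The main obstacle is the matching at $\Pi(Z_0)$, where two \emph{a priori} independent definitions of $A$ meet. Controlling $|A(p)-A(p_0)|$ as $p$ approaches $\Pi(Z_0)$ requires simultaneously invoking the Whitney structure (through Lemma \ref{l:whitney-prop1}) and the flatness comparison across scales (through Lemma \ref{l:dist-x-L}). Once that is in place, everything else reduces to the standard partition-of-unity calculus combined with the neighbour estimates of Lemma \ref{lemoqa}.
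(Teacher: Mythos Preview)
Your proposal is correct and follows the standard partition-of-unity argument that the paper itself defers to (Lemmas 7.24 and 7.27 of \cite{Tolsa-llibre}). One small caveat: at the matching step you claim only continuity across $\partial\Pi(Z_0)$, but continuity together with separate Lipschitz bounds on $\Pi(Z_0)$ and on its complement does not by itself yield a global Lipschitz bound; what is actually needed is the quantitative estimate $|A(p)-A(p_0)|\lesssim\alpha\,|p-p_0|$ for $p\in R_i$ and $p_0\in\Pi(Z_0)$, and the telescoping through Lemmas \ref{l:whitney-prop1} and \ref{l:dist-x-L} that you invoke in fact delivers exactly this stronger bound.
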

We will denote the graph of $A$ by $\Gamma$, that is
\begin{align}
    \Gamma := \{ (x, A(x)) \, |\, x \in L_{S_0}\}.
\end{align}
Also, we assume $\alpha$ small enough so that $A$ is Lipschitz with constant at most $1/100$.
\vv

Observe now that the estimate \rf{eqgr0} in Lemma \ref{keylemma} is an immediate consequence of the preceding lemma. Indeed,
$$G(R_0)\subset G(S_0)\subset Z_0\cap B_{S_0}\subset \Gamma \cap B_{S_0},$$
and thus, using that $\Gamma$ is a Lipschitz graph with slope at most $C\alpha\leq1$, we get
\begin{equation}\label{eqfinal93}
\HH^n(G(R_0))\leq \HH^n(\Gamma \cap B_{S_0})\lesssim r(B_{S_0})^n \approx \ell(R_0)^n,
\end{equation}
which proves \rf{eqgr0}. Further, it is easy to check that $G(R_0)$ (and so $G(S_0)$) is non-empty. Indeed, this is an immediate consequence of \rf{eqstop0}:
$$\omega^+(G(R_0)) = \omega^+(R_0) -
\omega^+\Big(\bigcup_{Q\in\Stop(R_0)} Q\Big)\geq (1- 2\ve)\,\omega^+(R_0)>0.
$$

\vv


\subsection{The Lipschitz graph $\Gamma$ and $\partial\Omega^+$ are close to each other}

The next four lemmas are proven as Lemmas 7.28, 7.29, 7.30, and 7.32 in \cite{Tolsa-llibre}, 
 although in our case the implicit
constants in the estimates below do not depend on $M$ or $\delta$ due to the Reifenberg flatness 
of $\partial\Omega^+$. 

\begin{lemma} \label{l:dist-B0}
    Every any $x \in 3B_{S_0}$ satisfies
    \begin{align*}
        \dist(x, \Gamma) \lesssim d(x).
    \end{align*}
\end{lemma}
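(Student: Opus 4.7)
The strategy is to exhibit, for each $x \in 3 B_{S_0}$, an explicit point $y \in \Gamma$ with $|x - y| \lesssim d(x)$, which immediately yields $\dist(x, \Gamma) \leq |x - y| \lesssim d(x)$. The construction of $y$ splits into two cases according to whether $p := \Pi(x)$ lies in $\Pi(Z_0)$ or in one of the Whitney cubes $R_i \in \Whit$.

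In the first case, Lemma \ref{l:Lip-a} shows that $\Pi|_{Z_0}$ is injective, so there is a unique $y \in Z_0 \subset \Gamma$ with $\Pi(y) = p$. Applying Lemma \ref{l:Lip-a} to $x$ and $y$, and using $\Pi(x) = \Pi(y)$ together with $d(y) = 0$, I obtain
\[
|x - y| = |\Pi^\perp(x) - \Pi^\perp(y)| \lesssim \alpha\,|\Pi(x) - \Pi(y)| + d(x) + d(y) = d(x).
\]

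In the second case, $p \in R_i$ for some $i \in I_\Whit$. Since $x \in 3 B_{S_0}$, it is immediate that $p \in B_0$, so $R_i \cap B_0 \neq \varnothing$ and $i \in I_0$; let $Q_i \in \Tree(S_0) \setminus \Stop(S_0)$ be the associated cube from Lemma \ref{l:whitney-prop1}. I take $y := (p, A(p)) \in \Gamma$ and decompose
\[
|x - y| = |\Pi^\perp(x) - A(p)| \leq |\Pi^\perp(x) - A_i(p)| + |A_i(p) - A(p)|.
\]
For the second summand, $A(p)$ is a convex combination of the values $A_k(p)$ over Whitney cubes $R_k$ with $p \in 3 R_k$, all of which are neighbors of $R_i$; Lemma \ref{lemoqa}(2) then gives $|A_k(p) - A_i(p)| \lesssim \ve_0 \ell(R_i)$, and hence $|A_i(p) - A(p)| \lesssim \ve_0 \ell(R_i)$. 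By Lemma \ref{l:whitney-dec}(1) applied at $p \in R_i$, $\ell(R_i) \approx \ell(Q_i) \lesssim D(p) \leq d(x)$, so this term is $\lesssim d(x)$. The first summand is the vertical distance from $x$ to the plane $L_{Q_i}$, and since $L_{Q_i}$ makes angle $\lesssim \alpha$ with $L_{S_0}$, $|\Pi^\perp(x) - A_i(p)| \leq (1 + O(\alpha))\, \dist(x, L_{Q_i})$.

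It therefore suffices to show $\dist(x, L_{Q_i}) \lesssim d(x)$. For this I would select a near-optimal $Q^* \in \Tree(S_0) \setminus \Stop(S_0)$ with $\dist(x, Q^*) + \ell(Q^*) \leq 2 d(x)$ and pick $q^* \in Q^*$ with $|x - q^*| \leq 2 d(x)$, then bound
\[
\dist(x, L_{Q_i}) \leq |x - q^*| + \dist(q^*, L_{Q_i}) \leq 2 d(x) + \dist(q^*, L_{Q_i}),
\]
and control $\dist(q^*, L_{Q_i})$ by means of the Reifenberg flatness of $\partial \Omega^+$ together with Lemma \ref{l:dist-x-L}. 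The main obstacle is making this last estimate uniform: when $d(x) \ll \ve_0 \ell(S_0)$, the vertical coordinate $|\Pi^\perp(x)|$ may be as large as $\ve_0 \ell(S_0)$ while $\ell(Q_i) \lesssim d(x) \ll \ve_0 \ell(S_0)$, so $q^*$ need not lie in $C B_{Q_i}$ for a uniform $C$; one must iterate Lemma \ref{l:dist-x-L} along a chain of ancestors of $Q_i$ in $\Tree(S_0) \setminus \Stop(S_0)$ up to the scale at which $q^*$ belongs to the corresponding ball, and sum the resulting geometric errors, each of which is controlled by Reifenberg flatness. This is exactly the argument used in the proof of Lemma 7.28 of \cite{Tolsa-llibre}, and it adapts verbatim to the present setting thanks to the hypothesis \rf{eqreif0}.
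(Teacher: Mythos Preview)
Your proof is correct and follows exactly the approach the paper invokes: the paper gives no proof of this lemma and simply refers to Lemma~7.28 of \cite{Tolsa-llibre}, which is precisely the argument you reconstruct. One minor simplification of your last paragraph: the ``chain of ancestors'' iteration is unnecessary, since a direct application of Lemma~\ref{l:Lip-a} to $q^*$ and any $z_i\in Q_i$ gives $|q^*-z_i|\lesssim \alpha|\Pi(q^*)-\Pi(z_i)|+d(q^*)+d(z_i)\lesssim d(x)$, so $q^*\in C B_{Q_i}$ for an absolute constant $C$, and then Reifenberg flatness at that scale yields $\dist(q^*,L_{Q_i})\lesssim \ve_0\,\ell(Q_i)\lesssim d(x)$ directly.
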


\begin{lemma}\label{l:distQ-L}
    Let $\ve_0>0$ be sufficiently small. If $Q \in \Tree(S_0) \setminus \Stop(S_0)$ and $x \in \Gamma \cap 2B_Q$, then
    \begin{align}\label{e:distQ-L}
        \dist(x,L_Q) \lesssim \ve_0 \,\ell(Q).
    \end{align}
\end{lemma}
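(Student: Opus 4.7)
The plan is to write $x=(p,A(p))$ with $p=\Pi(x)\in L_{S_0}$ and split the argument according to whether $p\in\Pi(Z_0)$ or $p$ lies in some Whitney cube $R_i\in\Whit$. In the first case, the definition of $Z_0$ forces $x\in\partial\Omega^+$, so $x\in\partial\Omega^+\cap 2B_Q$; applying the Reifenberg flatness \eqref{eqreif0} at a ball of radius $\approx\ell(Q)$ around $z_Q$ yields an approximating $n$-plane $L^*$ with $\dist(x,L^*)\lesssim\ve_0\ell(Q)$, and since both $L^*$ and $L_Q$ are $O(\ve_0\ell(Q))$-close to $\partial\Omega^+$ on $B_Q$, they differ by $O(\ve_0\ell(Q))$ on any bounded dilation of $B_Q$. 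This yields $\dist(x,L_Q)\lesssim\ve_0\ell(Q)$.

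Suppose now $p\in R_i$ for some $R_i\in\Whit$. Since $x\in 2B_Q$ with $Q\in\Tree(S_0)\setminus\Stop(S_0)$, the definition of $d$ gives $d(x)\leq\dist(x,Q)+\ell(Q)\lesssim\ell(Q)$, and the Whitney property forces $\ell(R_i)\approx D(p)\leq d(x)\lesssim\ell(Q)$. If some neighbor $R_j$ of $R_i$ with $p\in 3R_j$ satisfies $j\in I_0$, let $Q_j\in\Tree(S_0)\setminus\Stop(S_0)$ be the cube from Lemma \ref{l:whitney-prop1}. By the partition-of-unity formula and Lemma \ref{lemoqa}(2), $|A(p)-A_j(p)|\lesssim\ve_0\ell(R_j)\lesssim\ve_0\ell(Q)$. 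A short computation based on $\dist(R_j,\Pi(Q_j))\lesssim\ell(R_j)\approx\ell(Q_j)$, the slope bound for $A_j$, and the Reifenberg bound $\dist(z_{Q_j},L_{Q_j})\lesssim\ve_0\ell(Q_j)$ shows that $(p,A_j(p))\in L_{Q_j}\cap CB_{Q_j}$ for an absolute $C$, and that $B_{Q_j}$ is contained in a bounded dilation of $B_Q$. Then Reifenberg flatness applied at a common ball of radius $\approx\ell(Q)$ containing both $B_Q$ and $B_{Q_j}$ shows $L_Q$ and $L_{Q_j}$ are $O(\ve_0\ell(Q))$-close there, and combined with the partition-of-unity estimate this yields $\dist(x,L_Q)\lesssim\ve_0\ell(Q)$.

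In the remaining subcase no $R_j$ with $p\in 3R_j$ satisfies $j\in I_0$, so $A_j\equiv 0$ for all such $j$ and $x=(p,0)\in L_{S_0}$. Lemma \ref{lem73} then forces $\ell(R_i)\gtrsim\ell(S_0)$, which combined with $\ell(R_i)\lesssim\ell(Q)$ yields $\ell(Q)\approx\ell(S_0)\approx\ell(R_0)$. Applying Reifenberg flatness at a ball of radius $\approx\ell(S_0)$ containing both $B_Q$ and $B_{S_0}$ shows $L_Q$ and $L_{S_0}$ are $O(\ve_0\ell(Q))$-close on any bounded dilation of $B_Q$, so $\dist(x,L_Q)\lesssim\ve_0\ell(Q)$. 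The delicate step in the argument is the direct plane comparison in both subcases: rather than invoking Lemma \ref{l:dist-x-L} through a common dyadic ancestor in $\Tree(S_0)\setminus\Stop(S_0)$ (which need not exist when $Q$ and $Q_j$ lie in different neighbors of $R_0$), the plan is to rely on Reifenberg flatness at a common scale $\approx\ell(Q)$ to show $L_Q$ is $O(\ve_0\ell(Q))$-close to $L_{Q_j}$ or $L_{S_0}$.
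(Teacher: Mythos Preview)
Your case structure (split on $p\in\Pi(Z_0)$ versus $p\in R_i$, then on whether the relevant Whitney neighbors lie in $I_0$) matches the standard argument the paper defers to in \cite[Lemma~7.29]{Tolsa-llibre}, and Cases~1 and~2b are correct. But there is a genuine gap in Case~2a. Your assertion that ``$L_Q$ and $L_{Q_j}$ are $O(\ve_0\ell(Q))$-close'' on a common ball of radius $\approx\ell(Q)$ is false in general. Nothing in your argument forces $\ell(Q_j)\approx\ell(Q)$; you only have $\ell(Q_j)\approx\ell(R_i)\lesssim\ell(Q)$, and the ratio can be arbitrarily large. In that regime the only way to compare $L_{Q_j}$ with the optimal plane $L^*$ at scale $\ell(Q)$ is through $\partial\Omega^+\cap B_{Q_j}$, a set of diameter $\approx\ell(Q_j)$, and this pins down the angle only up to $\lesssim\ve_0\ell(Q)/\ell(Q_j)$. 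Over a ball of radius $\approx\ell(Q)$ the two planes can then separate by $\lesssim\ve_0\,\ell(Q)^2/\ell(Q_j)$, which gives nothing.

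The fix is to use the location of the point rather than a global plane comparison. You already have $(p,A_j(p))\in L_{Q_j}\cap CB_{Q_j}$. Reifenberg flatness at scale $\approx\ell(Q_j)$ (compare $L_{Q_j}$ with the optimal plane on $CB_{Q_j}$) produces $y^*\in\partial\Omega^+$ with $|y^*-(p,A_j(p))|\lesssim\ve_0\ell(Q_j)\le\ve_0\ell(Q)$. Combined with $|x-(p,A_j(p))|\lesssim\ve_0\ell(Q)$ this places $y^*\in\partial\Omega^+\cap 3B_Q$, and now your Case~1 reasoning (Reifenberg at scale $\approx\ell(Q)$) gives $\dist(y^*,L_Q)\lesssim\ve_0\ell(Q)$, hence $\dist(x,L_Q)\lesssim\ve_0\ell(Q)$. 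This two-step ``small-scale then large-scale'' use of Reifenberg flatness is exactly the mechanism behind Lemma~\ref{l:dist-x-L}: its proof only uses that $B_{Q_j}$ sits inside a bounded dilation of $B_Q$, not the dyadic inclusion $Q_j\subset Q$. So your worry about needing a common ancestor in $\Tree(S_0)\setminus\Stop(S_0)$ is misplaced --- you can apply (the proof of) Lemma~\ref{l:dist-x-L} directly with $P=Q_j$, without any detour.
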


\begin{lemma}\label{l:distG-A}
    Let $Q \in \Tree(S_0)$. Then every $x \in Q$ satisfies
    \begin{align*}
        \dist(x, \Gamma) \lesssim \ve_0 \ell(Q).
    \end{align*}
\end{lemma}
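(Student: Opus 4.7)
The plan is to reduce to the case where $Q$ is a stopping cube and then to estimate $\dist(x,\Gamma)$ by producing a point on $\Gamma$ via vertical projection onto the graph. If $Q \in \Tree(S_0) \setminus \Stop(S_0)$, then either $x$ lies in some strictly smaller stopping cube $Q' \subsetneq Q$ (in which case the bound for $Q'$, stronger since $\ell(Q') < \ell(Q)$, implies the one for $Q$), or no such $Q'$ exists and $x \in G(S_0) \subset Z_0 \subset \Gamma$ by the definition of $A$ on $Z_0$ in \eqref{eqaz0}, giving $\dist(x,\Gamma) = 0$ directly. So we may assume $Q \in \Stop(S_0)$ and denote by $\widehat Q$ its dyadic parent; by maximality, $\widehat Q \in \Tree(S_0) \setminus \Stop(S_0)$ with $\ell(\widehat Q) \approx \ell(Q)$, and since $x \in Q \subset \widehat Q \subset B_{\widehat Q}$, the Reifenberg flatness bound \eqref{eqreif0} combined with the minimizing property \eqref{eqnb*2} of $L_{\widehat Q}$ yields $\dist(x, L_{\widehat Q}) \lesssim \ve_0 \ell(Q)$.

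Next, I would pick $y \in L_{\widehat Q}$ realizing this distance and consider the point $(\Pi(y), A(\Pi(y))) \in \Gamma$. Writing $A_{\widehat Q}$ for the affine function on $L_{S_0}$ whose graph is $L_{\widehat Q}$, one has $|y - (\Pi(y), A(\Pi(y)))| = |A_{\widehat Q}(\Pi(y)) - A(\Pi(y))|$, so by the triangle inequality everything reduces to proving
\[
|A_{\widehat Q}(\Pi(y)) - A(\Pi(y))| \lesssim \ve_0 \ell(Q).
\]
In the generic case $\Pi(y) \notin \Pi(Z_0)$, one writes $A(\Pi(y)) = \sum_j \phi_j(\Pi(y)) A_{Q_j}(\Pi(y))$, with the sum taken over the Whitney cubes $R_j$ such that $\Pi(y) \in 3R_j$. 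Lemmas \ref{l:whitney-dec} and \ref{l:whitney-prop1} force $\ell(R_j) \approx \ell(Q_j) \lesssim d(y) \lesssim \ell(Q)$ and place $Q_j$ within spatial distance $O(\ell(Q))$ of $\widehat Q$. I would then pick an auxiliary cube $\widetilde Q \in \Tree(S_0)\setminus\Stop(S_0)$ at scale $\ell(\widetilde Q) \approx \ell(Q)$ containing (or neighboring) both $\widehat Q$ and $Q_j$, and invoke Lemma \ref{l:dist-x-L} twice to obtain
\[
|A_{\widehat Q}(\Pi(y)) - A_{\widetilde Q}(\Pi(y))| + |A_{Q_j}(\Pi(y)) - A_{\widetilde Q}(\Pi(y))| \lesssim \ve_0 \ell(Q),
\]
coming from the Hausdorff proximity of the corresponding planes on $B_{\widehat Q}$ and $B_{Q_j}$ respectively; summation over $j$ against the partition of unity yields the required bound.

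Finally, the exceptional case $\Pi(y) \in \Pi(Z_0)$, where $A(\Pi(y)) = \Pi^\perp(z)$ for the unique $z \in Z_0$ with $\Pi(z) = \Pi(y)$, is handled by approximating $z$ by a sequence $Q_k \in \Tree(S_0) \setminus \Stop(S_0)$ with $\dist(z, Q_k) + \ell(Q_k) \to 0$, running the same common-ancestor comparison between $\widehat Q$ and $Q_k$, and then passing to the limit to get $\dist(z, L_{\widehat Q}) \lesssim \ve_0 \ell(Q)$. The main obstacle I foresee is constructing the common ancestor $\widetilde Q$ at scale $\approx \ell(Q)$ when $\widehat Q$ and $Q_j$ (or $Q_k$) lie in different trees $\Tree(R), \Tree(R')$ with $R, R' \in \mathcal{N}(R_0)$ distinct: no honest dyadic common ancestor of size $\approx \ell(Q)$ then exists inside $\Tree(S_0)$, and the naive estimate $|N_{\widehat Q} - N_{Q_j}| \leq 2\alpha$ from the $\BBS$ stopping condition only yields the insufficient error $\alpha \ell(Q)$. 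To recover the sharper $\ve_0 \ell(Q)$ in this cross-tree situation one has to use the extended-cube framework of $\wh\DD$, replacing the missing ancestor by an extended cube in $\mathcal{ND}$ of side length $\approx \ell(Q)$ that contains both underlying cubes, and combine this with Reifenberg flatness at scale $\ell(Q)$ to force $L_{\widehat Q}$ and $L_{Q_j}$ to lie within $\ve_0 \ell(Q)$ of $\partial\Omega^+$ on a common ball. This is precisely the step where the assumption $\ve_0 \ll \alpha$ and the grouping of the neighbors of $R_0$ into the single extended cube $S_0$ are essential.
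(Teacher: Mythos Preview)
Your approach is correct but takes a more hands-on route than the argument the paper defers to (Lemma~7.30 in \cite{Tolsa-llibre}). That argument reduces in one step to $Q\in\Tree(S_0)\setminus\Stop(S_0)$ (replacing a stopping cube by its parent) and then uses Lemma~\ref{l:distQ-L} as a black box: for $x\in Q$ set $z=(\Pi(x),A(\Pi(x)))\in\Gamma$; Lemma~\ref{l:dist-B0} gives $\dist(x,\Gamma)\lesssim d(x)\le\ell(Q)$, which forces $z\in CB_Q$; Lemma~\ref{l:distQ-L} then yields $\dist(z,L_Q)\lesssim\ve_0\ell(Q)$, and since also $\dist(x,L_Q)\lesssim\ve_0\ell(Q)$ by Reifenberg flatness and $\Pi(x)=\Pi(z)$, one concludes $|x-z|\lesssim\ve_0\ell(Q)$. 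No partition-of-unity expansion and no cross-tree bookkeeping appear.

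Your route instead reopens the definition of $A$ and compares $L_{\widehat Q}$ to each $L_{Q_j}$ directly, which amounts to reproving Lemma~\ref{l:distQ-L} inline. The obstacle you flag---the lack of a common dyadic ancestor of size $\approx\ell(Q)$ when $\widehat Q$ and $Q_j$ sit in different $R\in\mathcal N(R_0)$---is real if you insist on invoking Lemma~\ref{l:dist-x-L} verbatim, because that lemma is stated with the containment $P\subset Q$. But the extended-cube detour you propose is heavier than necessary: the proof of Lemma~\ref{l:dist-x-L} is pure Reifenberg flatness and only uses that $B_P$ sits inside a bounded dilate of $B_Q$, not dyadic containment. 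Since the Whitney estimates already give $CB_{Q_j}\subset C'B_{\widehat Q}$, you may simply take $\widehat Q$ itself as the reference cube (i.e., set $\widetilde Q=\widehat Q$) and compare $L_{Q_j}$ to $L_{\widehat Q}$ directly, bypassing the common-ancestor search entirely. The assumption $\ve_0\ll\alpha$ and the extended cube $S_0$ are used elsewhere in the construction, but are not the crux of this particular step.
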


\begin{lemma}\label{lemult}
We have
$$\dist(x,L_{S_0})\lesssim \ve_0\,\ell(S_0)$$
for all $x\in \Gamma$.
\end{lemma}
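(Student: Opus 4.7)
The plan is to reduce the problem to bounding $|A(p)|$ for $p \in 1.2 B_0 \cap L_{S_0}$. For any $x \in \Gamma$, I write $x = (p, A(p))$ with $p \in L_{S_0}$, so that $\dist(x, L_{S_0}) = |A(p)|$. Since $A$ is supported in $1.2 B_0$ by Lemma \ref{l:A}, the claim is automatic when $p \notin 1.2 B_0$. Two cases then remain, according to whether $p \in \Pi(Z_0)$.

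In the first case, when $p \in \Pi(Z_0)$, I would use that $A(p) = \Pi^\perp(y)$ for the unique $y \in Z_0$ with $\Pi(y) = p$, as in \rf{eqaz0}. Since $d(y) = 0$, the point $y$ is a limit of cubes $Q \in \Tree(S_0) \setminus \Stop(S_0)$ with $\ell(Q) \to 0$, so $y \in \partial\Omega^+$ and $y$ lies in $B(z_{S_0}, C \ell(S_0))$ (because all such $Q$ are contained in $S_0$). The Reifenberg flatness condition \rf{eqreif0} at scale $\sim \ell(S_0)$, together with the near-minimality of $L_{S_0}$ as approximating plane for $\partial\Omega^+$ near $R_0$, then yields $|A(p)| = \dist(y, L_{S_0}) \lesssim \varepsilon_0 \ell(S_0)$.

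In the second case, when $p \in 1.2 B_0 \setminus \Pi(Z_0)$, Lemma \ref{l:whitney-dec}(4) places $p$ in some Whitney cube $R_i$, and the partition of unity gives
$$A(p) = \sum_{j \in I_0, \; p \in 3 R_j} \phi_j(p)\, A_j(p),$$
using $A_j \equiv 0$ for $j \notin I_0$. For each such $j$, Lemma \ref{l:whitney-prop1} supplies $Q_j \in \Tree(S_0) \setminus \Stop(S_0)$ with $\ell(Q_j) \approx \ell(R_j)$ and $\dist(R_j, \Pi(Q_j)) \lesssim \ell(R_j)$. Since the graph of $A_j$ is $L_{Q_j}$, the point $(p, A_j(p))$ lies on $L_{Q_j}$; the $\alpha$-Lipschitz bound on $A_j$ together with $\dist(z_{Q_j}, L_{Q_j}) \lesssim \varepsilon_0 \ell(Q_j)$ from Reifenberg flatness forces $(p, A_j(p)) \in C B_{Q_j}$ for some absolute constant $C$. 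I would then iterate Lemma \ref{l:dist-x-L} along the chain $Q_j \subset \cdots \subset R_*$, where $R_* \in \mathcal{N}(R_0)$ is the ancestor of $Q_j$ in $\Tree(S_0) \setminus \Stop(S_0)$; since the generations form a geometric sequence, the errors $\varepsilon_0 \ell(Q^{(k)})$ telescope and yield $\dist((p, A_j(p)), L_{R_*}) \lesssim \varepsilon_0 \ell(S_0)$. As $L_{R_*}$ and $L_{S_0} = L_{R_0}$ are near-minimizing planes for $\partial\Omega^+$ at the same scale and at nearby centers, Reifenberg flatness bounds their angular discrepancy by $\lesssim \varepsilon_0$, and one concludes $|A_j(p)| \lesssim \varepsilon_0 \ell(S_0)$; using $\sum_j \phi_j(p) = 1$ finishes the case.

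The hard part is the iteration in the second case: I need to verify that at each step of the chain $Q_j \subset Q_j^{(1)} \subset \cdots \subset R_*$, the point $(p, A_j(p))$ remains in the appropriate dilation $C B_{Q^{(k)}}$ so that Lemma \ref{l:dist-x-L} applies with uniform constants. The key structural fact ensuring this is that the tree condition precludes the big-slope stopping, so the planes $L_{Q^{(k)}}$ stay within $O(\alpha)$ angular distance of each other along the chain, and the heights of the affine functions $A^{(k)}$ built from them vary in a controlled way.
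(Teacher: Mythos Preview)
Your argument is correct and follows the standard route; the paper itself gives no proof here, deferring to \cite[Lemma~7.32]{Tolsa-llibre}. One simplification worth noting: the iteration you set up in the second case is unnecessary, because Lemma~\ref{l:dist-x-L} is stated for an \emph{arbitrary} pair $P\subset Q$ in $\Tree(S_0)\setminus\Stop(S_0)$, not only for parent--child pairs. A single application with $P=Q_j$ and $Q=R_*$ (the ancestor of $Q_j$ in $\mathcal N(R_0)$, which is indeed in $\Tree(S_0)\setminus\Stop(S_0)$) already gives $\dist\bigl((p,A_j(p)),L_{R_*}\bigr)\lesssim\ve_0\,\ell(S_0)$ in one step, and the concern in your final ``hard part'' paragraph disappears. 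The only detail that actually needs a word is that Lemma~\ref{l:dist-x-L} is stated for $x\in L_P\cap B_P$, whereas you only have $(p,A_j(p))\in L_{Q_j}\cap CB_{Q_j}$ for some fixed $C$; but the lemma extends to any fixed dilate $CB_P$ with the identical proof (one uses that points of $L_P\cap CB_P$ lie within $C\ve_0\,\ell(P)$ of $\partial\Omega^+$ by Reifenberg flatness, and then that $L_Q$ approximates $\partial\Omega^+$ in the corresponding dilate of $B_Q$, again by Reifenberg flatness and a standard comparison of best-fitting planes).
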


\vv


\subsection{The domains $\wt U^+$ and $\wt U^-$}
Recall that, without loss of generality, we assume that the $n$-plane $L_{S_0}$ coincides with the horizontal
$n$-plane $\{(x_1,\ldots,x_{n+1}):x_{n+1}=0\}$. We denote $\Omega_\Gamma^+=\{(x_1,\ldots,x_{n+1}):x_{n+1}>A(x_1,\ldots,x_n)\}$.
By the Reifenberg flatness property of $\Omega^+$ (recall \rf{eqreif0}), for every $Q\in \DD$
we have
$$B_Q\setminus U_{C\ve_0 r(B_Q)}(L_Q)\subset \Omega^+\cup \Omega^-$$
(where $U_t(A)$ stands for the $t$-neighborhood of $A$)
and one component of $B_Q\setminus U_{C\ve_0 r(B_Q)}(L_Q)$ is contained in $\Omega^+$ and the other in $\Omega^-$. 
In the particular case of $B_{S_0}$, by rotating the axes if necessary, we assume that
\begin{equation}\label{eqconnec1}
B_{S_0}\cap \R^{n+1}_+\setminus U_{C\ve_0 r(B_0)}(L_{S_0})\subset \Omega^+\quad\text{ and } \quad B_{S_0}\cap \R^{n+1}_-\setminus U_{C\ve_0 r(B_0)}(L_{S_0})\subset \Omega^-.
\end{equation}

We define two functions $A^\pm:L_{S_0}\to L_{S_0}^\bot$ as follows, for $x\in L_{S_0}$,
$$A^+(x) = \inf_{y\in G(S_0)} \big(A(\Pi(y)) + \tfrac1{100}|x-\Pi(y)|\big),$$
$$A^-(x) = \sup_{y\in G(S_0)} \big(A(\Pi(y)) -\tfrac1{100} |x-\Pi(y)|\big).$$
Recall that $G(S_0)\neq\varnothing$ and notice that $A^\pm$ are $\tfrac1{100}$-Lipschitz functions. Remark also that if $x\in \Pi(G(S_0))$,
then $A^+(x) = A^-(x) = A(x)$. Indeed, it is clear from the definition of $A^+$ that $A^+(x)\leq A(x)$. Also,
since $A$ is $1/100$-Lipschitz, for every $y\in G(S_0)$,
$$A(\Pi(y)) + \tfrac1{100}|x-\Pi(y)|\geq A(x),$$
and taking infimum over $y$ we get $A^+(x) \geq A(x)$. Hence $A(x)=A^+(x)$. The proof of $A(x)=A^-(x)$ is analogous.

Next we we consider the domains
$$U^+= \{x\in\R^{n+1}:\Pi^\bot(x)>A^-(\Pi(x))\},$$
$$U^-= \{x\in\R^{n+1}:\Pi^\bot(x)<A^+(\Pi(x))\}.$$
Notice that $U^\pm$ are both Lipschitz domains. 

\begin{lemma}\label{leminclu}
We have
$$\Omega^+\cap B(z_{S_0},10r(B_{S_0}))\subset U^+$$
and
$$\Omega^-\cap B(z_{S_0},10r(B_{S_0}))\subset U^-.$$
\end{lemma}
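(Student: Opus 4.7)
The plan is to prove the first inclusion by contradiction; the second follows symmetrically, swapping the roles of $(\Omega^+, A^-, U^+)$ with $(\Omega^-, A^+, U^-)$ and using the lower-half part of \rf{eqconnec1}. Suppose there were $x_0 \in \Omega^+ \cap B(z_{S_0}, 10\,r(B_{S_0}))$ with $\Pi^\perp(x_0) \le A^-(\Pi(x_0))$. Since $A$ is Lipschitz on $L_{S_0}$ and $\overline{G(S_0)}$ is compact, the supremum defining $A^-(\Pi(x_0))$ is attained at some $y_0 \in \overline{G(S_0)}$; using $\overline{G(S_0)} \subset Z_0$ and \rf{eqaz0}, one gets $y_0 = (\Pi(y_0), A(\Pi(y_0))) \in \Gamma \cap \partial\Omega^+$. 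Writing $s = \Pi^\perp(x_0 - y_0)$ and $q = \Pi(x_0 - y_0)$, the hypothesis gives $s \le -|q|/100$, so $|s| \ge |x_0-y_0|/101$; in other words, $x_0$ lies strictly inside the downward cone of slope $1/100$ with apex $y_0$.

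Next, I set $R = 2|x_0 - y_0|$. Since $y_0 \in \overline{B_{S_0}}$ and $x_0 \in B(z_{S_0}, 10\,r(B_{S_0}))$, one has $R \le 22\,r(B_{S_0}) \le 55\,\ell(R_0) < 100\,\ell(R_0)$, so \rf{eqreif0} applies at $(y_0, R)$ and yields an $n$-plane $L_R$ with unit normal $N_R$ such that $\partial\Omega^+ \cap B(y_0, R)$ lies in an $\ve_0 R$-neighbourhood of $L_R$, and one component of the complement in $B(y_0, R)$ is in $\Omega^+$, the other in $\Omega^-$. The crucial step is to prove $|N_R - e_{n+1}| \le C(\alpha + \ve_0)$, with $e_{n+1} = N_{S_0}$. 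If $R \le \ell(R_0)$, I approximate $y_0$ by a sequence $y_k \in G(S_0)$; each $y_k$ lies in some $R_k \in \mathcal{N}(R_0)$ with $y_k \in G(R_k)$, so there exist $Q_k \in \Tree(R_k) \setminus \Stop(R_k)$ containing $y_k$ with $\ell(Q_k) \approx R$. The definition of $\BBS(R_k)$ forces $|N_{Q_k} - N_{R_k}| < \alpha$, a Reifenberg comparison between neighbours gives $|N_{R_k} - N_{R_0}| \lesssim \ve_0$, and a comparison of near-minimisers at scale $R$ around $y_k$ gives $|N_R - N_{Q_k}| \lesssim \ve_0$; letting $k \to \infty$ disposes of this case. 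For $R \in (\ell(R_0), 100\,\ell(R_0)]$, a bounded Reifenberg-flatness iteration in scale around $z_{R_0}$ gives $|N_R - N_{R_0}| \lesssim \ve_0$ directly. The orientation from \rf{eqconnec1} then propagates to $B(y_0, R)$ via connectedness of $\Omega^\pm$ and the continuous dependence of the minimising normal across scales; translating $L_R$ by at most $\ve_0 R$ so that it passes through $y_0$, I conclude that $\{z \in B(y_0, R) : (z - y_0) \cdot N_R < -2\ve_0 R\} \subset \Omega^-$.

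Applying this last inclusion to $z = x_0$ and using $|N_R - e_{n+1}| \le C(\alpha + \ve_0)$,
\[
(x_0 - y_0) \cdot N_R = s + (x_0 - y_0) \cdot (N_R - e_{n+1}) \le -\tfrac{|x_0 - y_0|}{101} + C(\alpha + \ve_0)\,|x_0 - y_0|.
\]
Fixing $\alpha$ small (consistent with the Remark after Lemma \ref{keylemma}) and then $\ve_0 \ll \alpha$, the right-hand side is at most $-|x_0 - y_0|/300 = -R/600 < -2\ve_0 R$, so $x_0 \in \Omega^-$, contradicting $x_0 \in \Omega^+$. The main obstacle is the normal control $|N_R - e_{n+1}| \lesssim \alpha + \ve_0$, both when $R$ ranges up to $100\,\ell(R_0)$ (exceeding the top-scale $\ell(S_0)$ of the trees) and when $y_0 \in \overline{G(S_0)} \setminus G(S_0)$, possibly on the boundary of a stopping cube; both difficulties are handled by combining the stopping-time data from $\BBS$ for the trees of neighbours of $R_0$ with short Reifenberg-flatness iterations across nearby scales and centres, together with the approximation $y_k \to y_0$.
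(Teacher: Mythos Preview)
Your argument is correct and close in spirit to the paper's, but the execution differs. The paper does not argue by contradiction at a single scale; instead it shows $\partial U^+\cap B(z_{S_0},10r(B_{S_0}))\subset\overline{\Omega^-}$ and invokes connectivity. For $x\in\partial U^+$ it introduces the companion point $\wt x=(\Pi(x),A(\Pi(x)))\in\Gamma$ and runs the vertical ray $x(t)=x-te_{n+1}$ for $t\in[0,\ell(R_0)/10]$; for each $t$ a cube $Q\in\Tree(S_0)$ with $x(t),\wt x\in 2B_Q$ is selected, and Lemmas~\ref{l:distQ-L} and~\ref{lemult} (closeness of $\Gamma$ to the planes $L_Q$) force $\dist(x(t),L_Q)\gtrsim\ell(Q)$, so $x(t)$ sits in one of the two Reifenberg half-balls at that scale. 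Continuity in $t$, anchored at large $t$ by \rf{eqconnec1}, then gives $x\in\Omega^-$. Your route instead fixes the single scale $R=2|x_0-y_0|$ and controls the angle of $N_R$ directly from the $\BBS$ stopping condition via approximating cubes $Q_k\ni y_k$, which lets you bypass Lemmas~\ref{l:distQ-L}--\ref{lemult} and the detailed graph construction. The trade-off is that the orientation step---identifying which half of $B(y_0,R)$ lies in $\Omega^-$---is only sketched (``propagates via connectedness and continuous dependence of the minimising normal''); this hides precisely the chain of intermediate scales that the paper's moving-ray argument makes explicit. Both approaches are valid; yours is slightly more self-contained at the cost of that one hand-wave.
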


\begin{proof}
We will show that $\Omega^+\cap B(z_{S_0},10r(B_{S_0}))\subset U^+$. The other inclusion is proved by similar arguments.

By connectivity, it is enough to show that 
\begin{equation}\label{eqinc3}
\partial U^+\cap B(z_{S_0},10r(B_{S_0}))\subset \overline{\Omega^-} =\R^{n+1}\setminus\Omega^+,
\end{equation}
since $U^+\cap B(z_{S_0},10r(B_{S_0}))  \cap \Omega^+\neq\varnothing,$ by the first inclusion in 
\rf{eqconnec1}.
To prove \rf{eqinc3}, consider $x\in \partial U^+\cap B(z_{S_0},10r(B_{S_0}))$. 
Our objective is to show that $x\in\overline{\Omega^-}$. 
 In the case $x\in
\overline{G(S_0)}$, by construction we have $x\in\partial \Omega^+ = \partial \Omega^-$ and we are done.

In the case $x\not\in\overline{G(S_0)}$, 
 denote $x_0=\Pi(x)$ and let $y\in
\overline{G(S_0)}$ be such that
\begin{equation}\label{eqinc3.5}
\Pi^\bot(x) = A^-(x_0) = A(\Pi(y)) - \frac1{100}\,|x_0-\Pi(y)|.
\end{equation}
Recall that we are identifying $L_{S_0}^\perp$ with the vertical axis of $\R^{n+1}$.
Denote $\wt x = (x_0,A(x_0))$, so that 
$$\wt x - x = (A(x_0)- A^-(x_0))\,e_{n+1}.$$
Since $A$ is $C\alpha$-Lipschitz, we have
\begin{equation}\label{eqinc3.6}
\Pi^\bot (\wt x) = A(x_0) \geq A(\Pi(y)) - C\alpha\,|x_0-\Pi(y)|.
\end{equation}
Thus, combining \rf{eqinc3.5} and \rf{eqinc3.6}, we obtain
$$A^-(x_0) \leq A(x_0) + \Bigl(C\alpha- \frac1{100}\Big)\,|x_0-\Pi(y)|\leq A(x_0) - \frac1{200}\,|x_0-\Pi(y)|,$$
assuming $\alpha$ small enough. Hence,
$$|\wt x - x| = A(x_0) - A^-(x_0) \geq \frac1{200}\,|x_0-\Pi(y)|.$$

For $t\in[0,\ell(R_0)/10]$, consider the point
$$x(t) = x-t\,e_{n+1},$$
so that $x(0)=x$ and $$|\wt x - x(t)| = |\wt x - x| + t = A(x_0) - A^-(x_0) + t.$$
Let $R_i$, $i\in I_\Whit$, be such that $x_0\in R_i$ (in fact, by the definition of $I_0$, we have $i\in I_0$).
Observe that
$$
\ell(R_i)\lesssim \dist(x_0,\Pi(G(S_0)))\leq |x_0-\Pi(y)|\lesssim |x-\wt x|\leq |x(t)-\wt x|.
$$

By Lemma \ref{l:whitney-prop1}, there exists $Q_0\in\Tree(S_0)$ such that 
        $\ell(Q_0)\approx \ell(R_i)$  and $\dist(\Pi(Q_0),R_i)\lesssim \ell(R_i)$. By Lemma \ref{l:distG-A} we have
        $\dist(Q_0, \Gamma) \lesssim \ell(Q_0)$ and thus it easily follows that 
\begin{equation}\label{eqfafa43}
\dist(\wt x,Q_0)\lesssim \ell(Q_0).
\end{equation}
Notice also that the estimate in the display above ensures that   
\begin{equation}\label{eqfafa44}
|x(t)-\wt x| \gtrsim \ell(Q_0).
\end{equation}      
 
 Consider the minimal ancestor
$Q\in\DD$ of $Q_0$ such that $x(t),\wt x\in 2B_Q$. Suppose first that $\ell(Q)\leq \ell(S_0)$.
 From the definition of $\Tree(S_0)$, it follows that
 $Q\in\Tree(S_0)$, and by the minimality of $Q$, \rf{eqfafa43}, and \rf{eqfafa44}, we obtain
$$\ell(Q)\approx r(2B_Q) \approx |x(t)- \wt x| + \ell(Q_0) + \dist(\wt x,Q_0)  \lesssim |x(t)- \wt x| + \ell(Q_0)\lesssim|x(t)- \wt x|.$$ 
Let $z\in L_Q$ be such that $\Pi(z)=\Pi(x(t))= \Pi(\wt x)$.
Then, by Lemma \ref{l:distQ-L},
\begin{equation}\label{eqco84}
|x(t)-\wt x|\leq |x(t)-z| + |\wt x - z| \lesssim \dist(x(t),L_Q) + \dist(\wt x,L_Q)
\lesssim  \dist(x(t),L_Q) + \ve_0\,\ell(Q).
\end{equation}
Hence, 
$$\dist(x(t),L_Q)\geq c\,|x(t)-\wt x| - C\ve_0\,\ell(Q) \gtrsim \ell(Q).$$
This implies that $x(t)$ belongs to one of the two components of
$2B_Q\setminus U_{10\ve_0r(B_Q)}L_Q$, by the Reifenberg flatness of $\Omega^-$.
 In the case $\ell(Q)>\ell(S_0)$, since $|x(t)-\wt x|\lesssim \ell(S_0)$ by construction, 
 one easily  deduces that $\ell(Q)\approx \ell(S_0)$, and the same estimates above are also valid. Indeed, observe for example that the estimate $\dist(\wt x,L_Q)\lesssim \ve_0\,\ell(Q)$ used in \rf{eqco84} also holds in this case, by Lemma~\ref{lemult}.

For $t$ large enough, $x(t)\in\Omega^-$, by the assumption \rf{eqconnec1}, and then by connectivity we deduce that  $x(t)\in\Omega^-$
for all $t\in[0,\ell(R_0)/10]$. In particular, $x=x(0)\in \Omega^-$.
\end{proof}
\vv

Now we define the domains $\wt U^+,\wt U^-\subset \R^{n+1}$ as follows. First we consider the cylinder
$K_0 = \Pi(2B_{R_0})\times [-4\,r(B_{R_0}),\,4\,r(B_{R_0})]$, and then we set
$$\wt U^+ = U^+ \cap K_0,\qquad \wt U^- = U^- \cap K_0.$$
Observe that both $\wt U^+$ and $\wt U^-$ are bounded Lipschitz domains, and by Lemma \ref{leminclu}, we have
$$\Omega^+\cap K_0 \subset \wt U^+,\qquad\Omega^-\cap K_0 \subset \wt U^-.$$

Recall that the estimate \rf{eqgr0} has already been obtained at the end of Section \ref{subsec99}.
So to  complete the proof of Lemma \ref{keylemma} it only remains to prove \rf{eqstopr0}.\vv

\subsection{Proof of \rf{eqstopr0}}

Recall that
$$\omega^+(R\setminus G(R))\leq 2\ve\,\omega^+(R)
$$
for all $R\in\mathcal N(R_0)$, by \rf{eqstop0} applied to these cubes $R$ in place of $R_0$. Thus,
using also that $\omega^+$ is doubling and that $K_0\cap\partial\Omega^+\subset S_0$,
\begin{equation}\label{eqst92}
\omega^+(K_0\cap\partial\Omega^+\setminus G(S_0))\leq \sum_{R\in\mathcal N(R_0)} \omega^+(R\setminus G(R))
\leq 2\ve\sum_{R\in\mathcal N(R_0)} \omega^+(R) \lesssim\ve\,\omega^+(K_0\cap\partial\Omega^+).
\end{equation}

Consider a point $p_0\in K_0\cap\Omega^+$ such that $\dist(p_0,\partial (K_0\cap\Omega^+))\approx
\ell(R_0)$. 
From \rf{eqst92} and the change of pole formula \rf{eqchangepole} for NTA domains, we derive
\begin{equation}\label{eqtriv10}
\omega^{+,p_0}(K_0\cap\partial\Omega^+\setminus G(S_0))\lesssim \ve\,\omega^{+,p_0}(K_0\cap\partial\Omega^+)\leq \ve.
\end{equation}
We also have
$$\omega^{p_0}_{\Omega^+\cap K_0}(K_0\cap\partial\Omega^+)\gtrsim 1.$$
This follows easily from Lemma \ref{lembourgain}. Indeed, from this lemma we deduce that, for $p_1=z_{R_0} + \frac15\,r(B_{R_0})\,e_{n+1}$,
$$\omega^{p_1}_{\Omega^+\cap K_0}(K_0\cap\partial\Omega^+) \geq \omega^{p_1}_{\Omega^+\cap K_0}(B_{R_0}\cap\partial\Omega^+)
\gtrsim \frac{\capp(\tfrac14 B_{R_0} \setminus \Omega^+)}{r(B_{R_0})^{n-1}} \gtrsim 1,$$
where the last estimate follows from the fact that, by the exterior corkscrew condition of $\Omega^+$, there exists some ball 
$B\subset\tfrac14 B_{R_0} \setminus \Omega^+$ with radius comparable to $r(B_{R_0})$.\footnote{In the planar case, a similar estimate holds works, using an argument involving logarithmic capacity instead of Newtonian capacity.} By the Reifenberg flatness of $\Omega^+$ one can find a Harnack chain contained in $\Omega^+\cap K_0$ joining $p_0$ and $p_1$ (to this end notice that
the distance from the segment with end-points $p_0,p_1$ to $(\Omega^+\cap K_0)^c$ is comparable to $\ell(R_0)$). Thus,
$$\omega^{p_0}_{\Omega^+\cap K_0}(K_0\cap\partial\Omega^+)\approx \omega^{p_1}_{\Omega^+\cap K_0}(K_0\cap\partial\Omega^+)\gtrsim1.$$
From \rf{eqtriv10}, the maximum principle, and the last estimate, we infer
$$\omega^{p_0}_{\Omega^+\cap K_0}( K_0\cap\partial\Omega^+\setminus G(S_0)) \leq \omega^{+,p_0}(K_0\cap\partial\Omega^+\setminus G(S_0))\lesssim \ve\approx \ve\,\omega^{p_0}_{\Omega^+\cap K_0}(K_0\cap\partial\Omega^+).$$
Hence,
\begin{align}\label{eqguai99}
\omega^{p_0}_{\Omega^+\cap K_0}(K_0\cap\partial\Omega^+\cap G(S_0)) & = \omega^{p_0}_{\Omega^+\cap K_0}(K_0\cap\partial\Omega^+) - \omega^{p_0}_{\Omega^+\cap K_0}(K_0\cap\partial\Omega^+\setminus G(S_0)) \\
&\geq (1-C\ve)\,\omega^{p_0}_{\Omega^+\cap K_0}(K_0\cap\partial\Omega^+).\nonumber
\end{align}
\vspace{1mm}

Next we claim that
\begin{equation}\label{eqclaim72}
\omega^{p_0}_{\Omega^+\cap K_0}(K_0\cap\partial\Omega^+)\geq \omega_{\wt U^+}^{p_0}(K_0\cap\partial U^+).
\end{equation}
Indeed, for $x\in K_0\cap\partial\Omega^+$, we have $\omega^{x}_{\Omega^+\cap K_0}(K_0\cap\partial\Omega^+)=1\geq \omega_{\wt U^+}^{x}(K_0\cap\partial U^+)$. For $x\in\partial K_0 \cap \overline{\Omega^+}$,
since $\partial K_0 \cap\overline{\Omega^+}\subset \partial K_0 \cap\overline{\wt U^+}$, it follows also that
$$\omega^{x}_{\Omega^+\cap K_0}(K_0\cap\partial\Omega^+)=0= \omega_{\wt U^+}^{x}(K_0\cap\partial U^+).$$
So our claim follows from the maximum principle, since both $\omega^{x}_{\Omega^+\cap K_0}(K_0\cap\partial\Omega^+)$ and 
$\omega_{\wt U^+}^{x}(K_0\cap\partial U^+)$ are harmonic functions of $x\in \Omega^+\cap K_0$.

By the maximum principle again, recalling that $ G(S_0)\subset \partial\Omega^+ \cap \partial U^+$,  and by \rf{eqguai99} and \rf{eqclaim72}, we get
$$\omega_{\wt U^+}^{p_0}(K_0\cap G(S_0)) \geq \omega^{p_0}_{\Omega^+\cap K_0}(K_0\cap G(S_0))\geq
(1-C\ve)\,\omega^{p_0}_{\Omega^+\cap K_0}(K_0\cap\partial\Omega^+)\geq (1-C\ve)\,\omega_{\wt U^+}^{p_0}(K_0\cap\partial U^+).
$$
Equivalently,
$$\omega_{\wt U^+}^{p_0}(K_0\cap\partial U^+\setminus G(S_0)) \leq C\ve\,
\omega_{\wt U^+}^{p_0}(K_0\cap\partial U^+).
$$
Now, since $\wt U^+$ is a Lipschitz domain, by Dahlberg's theorem \cite{Dahlberg} 
$\omega^{p_0}_{\wt U^+}$ is an $A_\infty$ weight with respect to $\HH^n|_{\partial\wt U^+}$, and thus
we have
\begin{equation}\label{eqst925}
\HH^n\big(K_0\cap \partial U^+\setminus G(S_0)\big)\lesssim \ve^a\,\HH^n\big(K_0\cap \partial U^+\big)\lesssim \ve^a\,\ell(R_0)^n,
\end{equation}
for some $a>0$ (depending only on the $A_\infty$ character of $\omega^{p_0}_{\wt U^+}$ with respect to $\HH^n|_{\partial\wt U^+}$).

Our next objective consists of showing that
\begin{equation}\label{eqst93}
\sum_{Q\in\Stop(R_0)}\ell(Q)^n\lesssim \HH^n\big(K_0\cap \partial U^+\setminus G(S_0)\big).
\end{equation}
Clearly, this estimate, together with \rf{eqst925}, implies \rf{eqstopr0} for $\ve$ small enough. To prove \rf{eqst93}, notice that, for each $Q\in\Stop(R_0)$, $\frac14\wt B_Q\cap\Gamma\neq \varnothing$ by
Lemma \ref{l:distG-A} (recall that $\wt B_Q$ is defined in Theorem \ref{teo-christ}(d)). Hence,
$$\ell(Q)^n\lesssim\HH^n( \tfrac12\wt B_Q\cap \Gamma).$$ 
Observe also that the balls $\frac12\wt B_Q$, with $Q\in\Stop(R_0)$, are disjoint from $G(S_0)$. Indeed, by definition
$G(S_0) = S_0\setminus \bigcup_{P\in\Stop(S_0)}P$ and so $\wt B_Q \cap \partial\Omega^+\subset Q\subset \partial\Omega^+\setminus G(S_0)$.
Then we deduce
\begin{equation}\label{eqst94}
\sum_{Q\in\Stop(R_0)}\ell(Q)^n\lesssim \sum_{Q\in\Stop(R_0)} \HH^n( \tfrac12\wt B_Q\cap \Gamma) \leq 
\HH^n(B_{R_0}\cap \Gamma \setminus G(S_0)),
\end{equation}
where in the last inequality we took into account that the balls $\frac12\wt B_Q$, with $Q\in \Stop(R_0)$, are pairwise disjoint, contained in $B_{R_0}$, and disjoint from $G(S_0)$. Using the fact that $G(S_0)\subset \Gamma \cap \partial
\wt U^+$ an that $\Gamma$ and $\partial
U^+$ are Lipschitz graphs,  we derive
\begin{equation}\label{eqst95}
\HH^n(B_{R_0}\cap \Gamma\setminus G(S_0)) \approx \HH^n(\Pi( B_{R_0}\cap\Gamma\setminus G(S_0))) \leq \HH^n(K_0\cap \partial U^+
\setminus G(S_0)).
\end{equation}
By combining \rf{eqst94} and \rf{eqst95} we get \rf{eqst93}. Finally, just note again that \rf{eqst925} and \rf{eqst93} yield \rf{eqstopr0}
and then the proof of
 Lemma \ref{keylemma} is concluded.
\fiproof

\vv




\end{document}